\newtheorem{thm}{Theorem}[section]
\newtheorem{lem}[thm]{Lemma}
\newtheorem{prop}[thm]{Proposition}
\newtheorem{defn}[thm]{Definition}
\newtheorem{alg}[thm]{Algorithm}
\newcommand{\Z}{\mathbb{Z}}
\numberwithin{equation}{section}
\begin{document}
 
\title{All trees are six-cordial}

\author{Keith Driscoll}
\address{Keith Driscoll (\tt keithdriscoll@clayton.edu)}

\author{Elliot Krop}
\address{Elliot Krop (\tt elliotkrop@clayton.edu)}

\author{Michelle Nguyen}
\address{Michelle Nguyen (\tt ngannguyen@clayton.edu)}
\address{Department of Mathematics, Clayton State University}

\date{\today}

\begin{abstract}
For any integer $k>0$, a tree $T$ is $k$-cordial if there exists a labeling of the vertices of $T$ by $\mathbb{Z}_k$, inducing edge-weights as the sum modulo $k$ of the labels on incident vertices to a given edge, which furthermore satisfies the following conditions: 
\begin{enumerate}
\item  Each label appears on at most one more vertex than any other label.
\item Each edge-weight appears on at most one more edge than any other edge-weight.
\end{enumerate}
Mark Hovey (1991) conjectured that all trees are $k$-cordial for any integer $k$. Cahit (1987) had shown earlier that all trees are $2$-cordial and Hovey proved that all trees are $3,4,$ and $5$-cordial. We show that all trees are six-cordial by an adjustment of the test proposed by Hovey to show all trees are $k$-cordial.
\\[\baselineskip] 
	2010 Mathematics Subject Classification: 05C78
\\[\baselineskip]
	Keywords: graph labeling, cordial, $k$-cordial
\end{abstract}

\maketitle

\section{Introduction}
All graphs will be finite and simple. For basic graph theoretic notation and definitions, we refer the reader to D. West \cite{West}. For a survey of graph labeling problems and results, see Gallian \cite{Gallian}. 

For any tree $T$ and any integer $k>0$, a \emph{k-cordial} labeling of $T$ is a function $f:V(T)\rightarrow \Z_k$ inducing an edge-weighting also denoted by $f$, defined by $f(uv)=f(u)+f(v) \pmod k$ for any edge $(uv)$ of $T$, which satisfy the following conditions:
\begin{enumerate}[label=(\roman*)]
\item Each label appears on at most one more vertex than any other label.
\item Each weight appears on at most one more edge than any other edge-weight.
\end{enumerate}

In other words, if for any $a\in \Z_k$ we define $v_a$ and $e_a$ as the number of vertices and edges, respectively, which are labeled $a$, then the above conditions can be rewritten as
\begin{enumerate}[label=(\roman*)]
\item $|v_a-v_b|\leq 1$ for any distinct $a,b\in \Z_k$
\item $|e_a-e_b|\leq 1$ for any distinct $a,b\in \Z_k$
\end{enumerate}

Cahit \cite{Cahit} was the first to define $2$-cordial labelings (which he called cordial) as a simplification of graceful and harmonious labelings. Motivated by the Graceful Tree Conjecture of Rosa \cite{Rosa} and the Harmonious Tree Conjecture (HTC) of Graham and Sloane \cite{GrahamSloane}, he showed that all trees are $2$-cordial. The extension of the definition to groups is due to Hovey \cite{Hovey} who also showed that all trees are $3,4,$ and $5$-cordial. Hovey conjectured that trees are $k$-cordial for all $k$ and that all graphs are $3$-cordial. In the last twenty-five years there has been little  progress towards a solution to either conjecture. However, Cichacz, G\"orlich, and Tuza \cite{CGT} extended this problem to hypergraphs while Pechenik and Wise \cite{PW} considered the existence of cordial labelings for the smallest non-cyclic group $V_4$.

It should be noted that a solution to Hovey's first conjecture implies the HTC.

\section{Definitions and Facts}

The following two simple properties can be found in \cite{GrahamSloane}.

\begin{lem}\label{add}
For any $k>0$, if $f$ is $k$-cordial labeling of a graph $G$, then $f+a$ is a $k$-cordial labeling of $G$ for any $a\in \mathbb{Z}_k$.
\end{lem}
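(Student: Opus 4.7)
The plan is to observe that translating the labeling by a constant $a$ permutes the vertex-label counts $v_b$ and permutes the edge-weight counts $e_b$, so both balance conditions, which depend only on the multiset of counts, are preserved.

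First I would set $g = f+a$, so that $g(v) = f(v)+a \pmod k$ for every vertex $v$. For the vertex condition, I would observe that a vertex $v$ satisfies $g(v)=b$ if and only if $f(v)=b-a$, hence $v_b(g) = v_{b-a}(f)$. Since the map $b\mapsto b-a$ is a bijection on $\mathbb{Z}_k$, the multisets $\{v_b(g):b\in\mathbb{Z}_k\}$ and $\{v_b(f):b\in\mathbb{Z}_k\}$ coincide, so the bound $|v_a(g)-v_b(g)|\le 1$ follows from the corresponding bound for $f$.

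For the edge condition, the key (and really the only) point to be careful about is that translation shifts edge weights by $2a$, not by $a$: for an edge $uv$,
\[
g(uv) = g(u)+g(v) = f(u)+f(v)+2a = f(uv)+2a \pmod k.
\]
Thus $e_b(g) = e_{b-2a}(f)$, and again the map $b\mapsto b-2a$ is a bijection on $\mathbb{Z}_k$, so the edge-count multisets coincide and condition (ii) transfers from $f$ to $g$.

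There is no real obstacle; the only subtlety is the factor of $2$ appearing in the edge-weight shift, which matters only insofar as one needs $b\mapsto b-2a$ to be a bijection on $\mathbb{Z}_k$ — and it is, because it is a translation. Both conditions (i) and (ii) for $g=f+a$ then follow immediately from those for $f$.
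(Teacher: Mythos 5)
Your proof is correct and is exactly the standard argument; the paper gives no proof of this lemma (it simply cites Graham and Sloane), and your key observation---that the edge weights shift by $2a$ rather than $a$, and that translation by the fixed element $2a$ is still a bijection on $\mathbb{Z}_k$ even though multiplication by $2$ need not be invertible---is precisely the point that needs to be checked.
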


\begin{lem}\label{unitmult}
For any $k>0$, if $f$ is $k$-cordial labeling of a graph $G$, then $-f$ is a $k$-cordial labeling of $G$.
\end{lem}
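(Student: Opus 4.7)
The plan is to verify directly that the labeling $-f$ (defined by $(-f)(v) = -f(v) \pmod{k}$ for each vertex $v$) satisfies both cordiality conditions by observing that negation is a bijection on $\mathbb{Z}_k$, so it merely permutes the counts $v_a$ and $e_a$.

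First I would compute the induced edge-weights under $-f$. For any edge $uv$ of $G$,
\[
(-f)(uv) = (-f)(u) + (-f)(v) = -\bigl(f(u)+f(v)\bigr) = -f(uv) \pmod{k}.
\]
So the edge-weighting induced by $-f$ is exactly $-f$ applied to the original edge-weights.

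Next, let $v'_a$ and $e'_a$ denote the number of vertices and edges labeled $a \in \mathbb{Z}_k$ under the new labeling $-f$. Since $a \mapsto -a$ is a bijection of $\mathbb{Z}_k$, a vertex is labeled $a$ under $-f$ precisely when it is labeled $-a$ under $f$, giving $v'_a = v_{-a}$; the same argument on edges yields $e'_a = e_{-a}$.

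Finally, for any distinct $a,b \in \mathbb{Z}_k$, the elements $-a$ and $-b$ are also distinct, so the cordiality of $f$ gives $|v_{-a} - v_{-b}| \le 1$ and $|e_{-a} - e_{-b}| \le 1$. Substituting yields $|v'_a - v'_b| \le 1$ and $|e'_a - e'_b| \le 1$, establishing that $-f$ is $k$-cordial. There is no real obstacle here: the only content is noticing that negation permutes $\mathbb{Z}_k$, so both multisets of counts are preserved and the balance conditions carry over automatically.
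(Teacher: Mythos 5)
Your proof is correct and is exactly the standard direct verification: negation is a bijection on $\mathbb{Z}_k$ that sends the induced weight of each edge to its negative, so the vertex- and edge-count multisets are merely permuted and the balance conditions carry over. The paper itself omits the proof, simply citing Graham and Sloane, so your argument supplies the intended (and essentially only) justification.
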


\begin{defn}
A \emph{caterpillar} is a tree $T$ such that for a maximum path $P$, all vertices are of distance at most one from $P$.
\end{defn}

The next result can be found in \cite{Hovey}.

\begin{thm}\label{cat}
Caterpillars are $k$-cordial for all $k>0$.
\end{thm}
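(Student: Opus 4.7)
The plan is to construct a $k$-cordial labeling of a caterpillar $T$ explicitly, using its linear structure. Write the spine as $v_1,v_2,\dots,v_n$, and for each $i$ let $L_i$ be the set of leaves pendant to $v_i$, with $d_i=|L_i|$. The key observation is that once a label $a_i\in\Z_k$ is fixed for $v_i$, assigning a label $b\in\Z_k$ to a leaf $\ell\in L_i$ contributes $b$ to the vertex counts and $a_i+b$ to the edge counts, so the two contributions differ by the fixed shift $a_i$. Equivalently, labeling a leaf gives us complete freedom to pick any one of the $k$ pairs $(b,a_i+b)$ lying on a single coset of the diagonal in $\Z_k\times\Z_k$.

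First I would establish that a path on $n$ vertices is $k$-cordial by an explicit construction---for example, walking through $\Z_k$ cyclically in a suitable pattern and applying small end corrections, normalized via Lemma~\ref{add} and Lemma~\ref{unitmult}. With a balanced spine labeling $a_1,\dots,a_n$ secured, I would then process the leaf sets $L_1,\dots,L_n$ in order. At step $i$, the aim is to choose the multiset of labels for $L_i$ so that the partial vertex counts and partial edge counts remain as close to balanced as possible after bundle $i$; since the leaves of $L_i$ are processed independently given $a_i$, this amounts to choosing a multiset of $d_i$ label/weight pairs, each constrained to lie on the coset determined by $a_i$.

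The main obstacle is precisely that within each bundle $L_i$ the vertex and edge contributions are tied by the fixed shift $a_i$, so one cannot independently balance the two count vectors. To overcome this, I would allow the spine labels $a_i$ themselves to be chosen jointly with the labels of $L_i$, exploiting the $k^{d_i+1}$ joint options at bundle $i$ to absorb any residual imbalance left by earlier bundles. A case analysis would distinguish bundles with many leaves, where ample flexibility is available, from bundles with few leaves, where the imbalance that must be absorbed is correspondingly small; the boundary situations (stars, very short spines, and the final bundle where the allowable slack shrinks to zero) would need separate direct verification, possibly again using Lemma~\ref{add} and Lemma~\ref{unitmult} to normalize convenient base cases.
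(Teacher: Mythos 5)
Your proposal correctly isolates the central difficulty---that each leaf of $L_i$ contributes a tied pair $(b,\,a_i+b)$, so the vertex-label and edge-weight counts cannot be adjusted independently within a bundle---but it does not overcome it. The claim that the $k^{d_i+1}$ joint choices at bundle $i$ can ``absorb any residual imbalance left by earlier bundles'' is precisely the statement that needs proof, and no mechanism is given for why a greedy absorption succeeds: you do not quantify the imbalance that can accumulate, you do not show it stays within the slack available at the next bundle, and you explicitly note that at the final bundle the slack ``shrinks to zero'' without resolving what happens there. Deferring the stars, short spines, and final-bundle cases to an unspecified ``separate direct verification'' leaves the argument incomplete; as written this is a plan for a proof, not a proof.

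For contrast, the paper's proof (following Hovey, via Grace's sequential labeling) sidesteps the bundle-by-bundle balancing entirely with one global construction: draw the caterpillar as a planar bipartite graph with parts $A$ and $B$, label the vertices of $A$ by consecutive integers $\ell,\dots,\ell+|A|-1$ in planar order and then those of $B$ by the next consecutive integers, and reduce modulo $k$. The point of Grace's theorem is that this labeling makes the vertex labels one run of consecutive integers and the edge weights another run of consecutive integers, so both are automatically equidistributed modulo $k$ for every $k$ simultaneously---no inductive bookkeeping of residual imbalance is needed. If you want to salvage your approach, you would need an explicit invariant (a bound on the discrepancy vectors after bundle $i$) together with a proof that each bundle can restore it, which is substantially more work than the sequential labeling.
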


The proof of the above theorem is obtained by the sequential labeling of Grace \cite{Grace}. We include its description since we use variants of this labeling throughout the paper.

\begin{alg}{$k$-cordial labelings of caterpillars}

Given a caterpillar $T$, draw $T$ as a planar bigraph with partite sets $A$ and $B$. Choose any nonnegative integer $\ell$ and label the vertices of $A$ sequentially by $\ell, \dots, \ell+|A|$, starting at the top (bottom). Next label the vertices of $B$ sequentially by $\ell+|A|+1, \dots, \ell +|A|+|B|$ starting at the top (bottom). Reduce all labels modulo $k$ to obtain a $k$-cordial labeling.

\end{alg}

The following is consequence of Grace's algorithm:

\begin{prop}\label{fact2}
If $T$ is a rooted caterpillar with $k$ vertices (not counting the root) for some positive integer $k$, longest path $P$, and root $r$ at distance $1$ from an endpoint of $P$, then $T$ is $k$-cordial with every weight appearing exactly once.
\end{prop}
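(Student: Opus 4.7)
My plan is to derive this as a direct corollary of Grace's sequential labeling (Theorem~\ref{cat}). First I would unpack the root condition: since the endpoint of $P$ closest to $r$ is a leaf of $T$, its unique neighbor is the second vertex of $P$, so $r$ must be that second vertex. Thus $T$ is a caterpillar on $k+1$ vertices and $k$ edges, with a natural bipartition $(A,B)$ in which the nearby endpoint $p_0$ lies in $A$ and $r$ lies in $B$.

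Then I would draw $T$ as a planar bigraph and apply Grace's scheme with starting value $\ell=0$: label the vertices of $A$ sequentially as $0,1,\ldots,|A|-1$ and those of $B$ sequentially as $|A|,|A|+1,\ldots,|A|+|B|-1=k$, reducing modulo $k$. By Theorem~\ref{cat} (i.e.\ the validity of Grace's algorithm on caterpillars), this labeling is $k$-cordial.

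The final step is a short pigeonhole argument that upgrades ``$k$-cordial'' to ``every weight exactly once.'' Writing $e_a$ for the number of edges of weight $a$, we have $\sum_{a\in\Z_k} e_a = |E(T)| = k$ together with the cordiality constraint $|e_a-e_b|\le 1$ for all $a,b$. If some $e_b\ge 2$, then summing $k$ nonnegative integers to $k$ forces some $e_a=0$, yielding $|e_a-e_b|\ge 2$, a contradiction. Hence $e_a=1$ for every $a\in\Z_k$, as required.

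I do not foresee a genuine obstacle here. The only content of the proposition beyond Theorem~\ref{cat} is the observation that $|E(T)|=|\Z_k|$ collapses the edge-cordiality inequality into exact uniformity; the root-at-distance-one hypothesis is essentially structural, serving to fix the caterpillar's bipartition for later applications of this proposition rather than playing an essential role in the argument itself.
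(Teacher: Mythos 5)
Your pigeonhole argument for the weights is fine, and it is essentially the paper's point made slightly more formally: the caterpillar together with its root has $k+1$ vertices, hence $k$ edges, so $\sum_a e_a = k$ combined with $|e_a-e_b|\le 1$ forces $e_a=1$ for all $a$. But the proposition concerns a \emph{rooted} caterpillar, and in this paper's convention the root is excluded from the vertex count; so ``$T$ is $k$-cordial'' also requires each of the $k$ labels to appear exactly once on the $k$ non-root vertices. Your proof never addresses this, and the specific labeling you propose generally fails it. Among all $k+1$ vertices the labels are $0,1,\dots,k$ reduced mod $k$, so exactly one residue (namely $0$) occurs twice, and what you need is that one of those two occurrences is the root. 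In your scheme label $0$ goes to the first vertex of $A$ (which is $p_0$) and label $k\equiv 0$ goes to the last vertex of $B$ in the planar order; but the root $r=p_1$, being adjacent to $p_0$, sits at the \emph{same} end of $B$ as $p_0$ does of $A$, so it is the first vertex of $B$ and receives label $|A|$. The non-root vertices then carry $0$ twice and omit $|A|$ entirely, which violates the vertex condition of rooted cordiality and would wreck the vertex balance when $T_1$ is pasted back onto $T_0$ in the main induction.

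The repair is exactly what the paper's proof does: run Grace's algorithm \emph{starting at the root}, labeling the root's part first beginning with the root itself (label $0$) and then the other part, so that the second occurrence of $0$ lands on the final vertex of the other part and the root absorbs the duplicate. This is possible precisely because the hypothesis that $r$ is at distance $1$ from an endpoint of $P$ forces $r=p_1$ to be an extreme vertex of its part in the planar drawing, so the sequential (weight-consecutive) labeling may begin there. Your closing remark that this hypothesis ``does not play an essential role'' is therefore backwards: it is the whole content of the proposition, and its failure is exactly why the rooted trees $a$ through $h$ must later be labeled by hand.
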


\begin{proof}
We label the vertices of $T$ by drawing the caterpillar as a bipartite graph and applying Grace's algorithm begining by labeling the root $0$. Since the root is either the first or last vertex of its part, proceed in Grace's algorithm by labeling the rest of the vertices in the part sequentially. The only repeated label will be zero on the final vertex in the part not containing the root. Since the labeling is sequential, all weights are represented.
\end{proof}

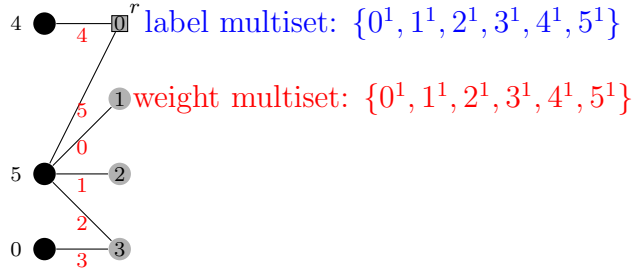
\begin{figure}[ht]
\begin{center}
\begin{tikzpicture}[scale=1]
\tikzstyle{vert}=[circle,fill=black,inner sep=3pt]
\tikzstyle{overt}=[circle,fill=black!30, inner sep=3pt]
\tikzstyle{root}=[rectangle,draw,fill=black!30,inner sep=3pt]

  \node[vert, label=left:\tiny{0}] (u1) at (2.5,-5) {};
  \node[overt, label=center:\tiny{3}] (u2) at (3.5,-5) {};
  \node[vert, label=left:\tiny{5}] (u3) at (2.5,-4) {};
  \node[root, label=center:\tiny{0}] (u4) at (3.5,-2) {};
  \node[overt, label=center:\tiny{2}] (u6) at (3.5,-4) {};  
  \node[overt, label=center:\tiny{1}] (u7) at (3.5,-3) {};
  \node[vert, label=left:\tiny{4}] (u8) at (2.5,-2) {};

  \draw[color=black] 
   (u1)--(u2)--(u3)--(u4)
   (u6)--(u3) (u7)--(u3) (u8)--(u4) 
;

\node [color=black] at (3.70,-1.80){\tiny{$r$}};

  \node [color=red] at (3,-5.15) {\tiny{3}};
  \node [color=red] at (3,-4.65) {\tiny{2}};
  \node [color=red] at (3,-4.15) {\tiny{1}};
  \node [color=red] at (3,-3.65) {\tiny{0}};
  \node [color=red] at (3,-3.15) {\tiny{5}};
  \node [color=red] at (3,-2.15) {\tiny{4}};

\node at (7,-2){\color{blue}label multiset: $\{0^1,1^1,2^1,3^1,4^1, 5^1\}$};
\node at (7,-3){\color{red}weight multiset: $\{0^1,1^1,2^1,3^1,4^1,5^1\}$};

\end{tikzpicture}
\caption{Rooted Caterpillar as in Proposition \ref{fact2}}
\end{center}
\end {figure}

The following fact can be easily verified.

\begin{prop}\label{fact1}
Every tree on at most six vertices is a caterpillar.
\end{prop}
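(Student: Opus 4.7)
The plan is to prove the contrapositive: any tree that fails to be a caterpillar must have at least seven vertices. I would use the standard structural characterization that a tree $T$ is a caterpillar if and only if the tree $T'$ obtained by deleting every leaf of $T$ is a path (possibly empty, a single vertex, or an edge). Under this characterization, $T$ fails to be a caterpillar precisely when $T'$ contains a vertex of degree at least $3$.

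The key step is a vertex count. Suppose $T$ is not a caterpillar, so $T'$ has a vertex $v$ of degree $\geq 3$ in $T'$. Since $T'$ is itself a tree (removing leaves from a tree yields a tree), $v$ together with three of its neighbors in $T'$ accounts for at least $4$ vertices of $T$, all of which are internal (non-leaf) vertices of $T$. Now the three neighbors of $v$ in $T'$ have degree at most $2$ in $T'$ in the worst case, but crucially each of them, being internal in $T$ but possibly a leaf of $T'$, must have at least one neighbor in $T$ that was deleted when forming $T'$ unless it has degree $\geq 2$ already inside $T'$. The cleanest way is to observe that if a vertex $u$ of $T'$ is a leaf of $T'$, then $u$ had degree $\geq 2$ in $T$ (otherwise it would have been removed), so $u$ has at least one neighbor in $T$ which is a leaf of $T$. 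Applying this to the three leaves (or more) of the subtree of $T'$ induced around $v$ — a star $K_{1,3}$ requires three such leaf-neighbors in $T$ — yields at least $4 + 3 = 7$ vertices.

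Concretely, I would argue: if $T'$ contains a vertex $v$ of degree $\geq 3$, pick three distinct neighbors $u_1, u_2, u_3$ of $v$ in $T'$. Each $u_i$ lies in $T'$, hence is not a leaf of $T$, so $u_i$ has degree $\geq 2$ in $T$. Since $u_i$'s edge to $v$ is only one incidence, $u_i$ has at least one other neighbor $w_i$ in $T$. The vertices $w_1, w_2, w_3$ are distinct from $v, u_1, u_2, u_3$ and from each other (since $T$ is a tree, two of them coinciding would create a cycle, and none can equal $v$ since $v$'s neighbors among $u_j$ are fixed). This produces $7$ distinct vertices in $T$.

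I do not expect any real obstacle here; the argument is a short structural count. The only subtlety is being careful that the three "extra" vertices $w_1, w_2, w_3$ are genuinely new, which follows from $T$ being acyclic. One could alternatively prove the proposition by a direct case analysis on the degree sequence of a tree on $n \leq 6$ vertices, but the approach above is more conceptual and scales cleanly.
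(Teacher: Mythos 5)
Your argument is correct. The paper offers no proof at all for this proposition (it is stated as a fact that ``can be easily verified,'' i.e.\ by inspecting the $14$ isomorphism classes of trees on at most six vertices), whereas you give a structural argument showing that any non-caterpillar tree has at least seven vertices --- in effect, that the spider obtained by subdividing each edge of $K_{1,3}$ is the smallest non-caterpillar. Your count is sound: the distinctness of $w_1,w_2,w_3$ from each other and from $v,u_1,u_2,u_3$ does follow from acyclicity exactly as you say (a coincidence $w_i=u_j$ would create a triangle through $v$, and $w_i=w_j$ a $4$-cycle). The one thing you lean on without proof is the equivalence between the paper's definition of caterpillar (all vertices within distance one of a maximum path) and the leaf-deletion characterization ($T$ minus its leaves is a path); this is standard and fine to cite, but strictly speaking it is an extra lemma your route requires and the brute-force route does not. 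The trade-off is clear: exhaustive verification is shorter for $n\leq 6$ and needs no auxiliary facts, while your argument explains \emph{why} the bound is six and would generalize to identifying the minimal obstruction.
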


Hovey \cite{Hovey} defined $A$-cordiality of rooted forests for any abelian group $A$ and studied the particular case when $A$ is cyclic. In all but one instance, we apply his definition for the special case when the rooted forest is a rooted tree. Note that in the following definition, the root set is not considered a subset of the vertex set of the rooted forest. Thus, a rooted forest $F$ of order $k$ has $k$ vertices and a set of roots $R$ that are not the vertices of $F$ and where the size of $R$ is the same as the number of components of $F$. 

\begin{defn}
For any integer $k>0$, a rooted forest $F$ with vertex set $V$, edge set $E$, and root set $R$ is $k$-cordial if for every labeling $g:R\rightarrow \Z_k$ and every $\ell\in \Z_k$, there is a function $f:V\rightarrow \Z_k$ satisfying 
\begin{enumerate}
\item $|v_i - v_j|\leq 1$ for all $i,j\in \Z_k$ 
\item $|e_i - e_j|\leq 1$ for all $i,j\in \Z_k$ where neither $i$ nor $j$ is $\ell$
\item $0\leq e_{\ell}-e_i\leq 2$ for all $i\in \Z_k$
\end{enumerate}
\end{defn}

\begin{thm}[Hovey \cite{Hovey}]\label{test}
For any $k>0$, if all trees and rooted forests with $k$ vertices are $k$-cordial, then all trees are $k$-cordial.
\end{thm}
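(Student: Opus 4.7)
My plan is induction on $n = |V(T)|$. The base case $n = k$ follows immediately from the hypothesis; trees with $n < k$ lie outside the recursion and must be handled separately, but in that range the cordiality constraints force all labels and all edge-weights to appear at most once, which can be verified directly (for instance, Theorem \ref{cat} covers the cases where the tree is a caterpillar).

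For the inductive step assume $n > k$. The strategy is to split $T$ into a subtree $T' \subseteq T$ with $|V(T')| = n - k$ and a rooted forest $F$ on the remaining $k$ vertices, label $T'$ inductively, and extend using the rooted-forest hypothesis. Such a $T'$ exists since every tree admits subtrees of each size $1, 2, \dots, n$ (grow from a single leaf, or equivalently prune one leaf at a time). Setting $U = V(T) \setminus V(T')$ and $F = T[U]$, a simple edge count shows that if $F$ has $c$ components, then exactly $c$ edges of $T$ join $U$ to $V(T')$; the corresponding endpoints in $V(T')$ serve as the root set $R$ of size $c$, and together $F$ and $R$ constitute a rooted forest of order $k$ in the sense of the given definition, contributing $k$ edges in all.

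Given a $k$-cordial labeling $f'$ of $T'$ from induction, define $g: R \to \Z_k$ by $g(r) = f'(r)$ and choose an excess weight $\ell \in \Z_k$ that is under-represented among the edge-weights of $T'$, using Lemmas \ref{add} and \ref{unitmult} to shift or negate $f'$ when a particular $\ell$ is needed. The rooted-forest hypothesis applied to $F$, $g$, and $\ell$ then yields a labeling $f^F: U \to \Z_k$, and combining $f^F$ with $f'$ produces a candidate labeling $f: V(T) \to \Z_k$. Vertex balance is automatic: with $|U| = k$, the cordiality conditions on $F$ force each label in $\Z_k$ to appear on exactly one vertex of $U$, so the vertex counts of $T$ inherit the balance from $T'$.

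The main obstacle is the edge-weight bookkeeping. The rooted-forest condition only ensures that $e^F_\ell$ exceeds the other $e^F_a$'s by at most $2$, while the edge-weight counts of $T'$ already differ by at most $1$, and one must verify that the combined counts $e^T_a = e^{T'}_a + e^F_a$ still differ by at most $1$. The idea is to pair an under-represented weight in $T'$ with the excess slot $\ell$ in $F$; by appropriately choosing $\ell$ (and possibly shifting $f'$ via the two lemmas) one matches the surplus in $F$ against the deficit in $T'$. A short case analysis on $n \bmod k$ and on the distribution of high-versus-low weights in $T'$ completes the argument.
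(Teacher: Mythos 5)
First, note that the paper does not prove this statement itself --- it is quoted from Hovey --- so there is no internal proof to compare against; I am evaluating your argument on its own.

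Your overall architecture (peel off a rooted forest on $k$ vertices, label the remaining subtree inductively, and extend using the rooted-forest hypothesis with the roots labeled by the inductive labeling) is the right one, and your structural bookkeeping is correct: exactly one edge joins each component of $F$ to $T'$, the rooted forest contributes exactly $k$ edges including the root edges, and vertex balance is automatic since each label occurs exactly once on the $k$ new vertices. The genuine gap is exactly the step you defer to "a short case analysis": the edge-weight accounting does not close for general $n$. With $k$ edges on $k$ weights, the rooted-forest conditions force the weight profile of $F$ to be either all ones, or $e_\ell=2$ with some \emph{other} weight $i_0$ occurring zero times and the rest once --- and the hypothesis gives you no control over which profile occurs, nor over which weight plays the role of $i_0$. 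When $|E(T')| = n-k-1 \equiv -1 \pmod k$, i.e.\ $n\equiv 0\pmod k$, the tree $T'$ has a unique minority weight; setting $\ell$ equal to it makes both profiles harmless no matter what $i_0$ is, and the step goes through. But when $n\not\equiv 0\pmod k$, $T'$ has at least two minority weights, and if the forest labeling happens to drop one minority weight $i_0$ while doubling another minority weight $\ell$, the combined counts differ by $2$. Lemmas \ref{add} and \ref{unitmult} only transform the set of minority weights by $w\mapsto w+2a$ and $w\mapsto -w$; they cannot reduce it to a single element. So the induction in uniform steps of $k$ fails for $k-2$ of the $k$ residue classes, which is precisely why Hovey (and this paper, for $k=6$) runs the induction only through orders divisible by $k$ and reaches the intermediate orders by a separate device (Lemma \ref{hovey} together with, here, a dedicated splitting for orders $\equiv 5 \pmod 6$). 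Without supplying that mechanism, your inductive step is incomplete.

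A secondary, fixable issue: your base case for $n<k$ is not justified as written, since for general $k$ not every tree on fewer than $k$ vertices is a caterpillar, and "verified directly" is not an argument. The correct fix is cheap: extend such a tree by adding leaves to a tree on exactly $k$ vertices, apply the hypothesis (which forces all $k$ labels and all $k-1$ weights to be distinct there), and restrict; distinctness is inherited by subtrees, and for $n\le k$ distinctness is exactly $k$-cordiality.
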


\begin{lem}[Hovey \cite{Hovey}]\label{hovey}
If all trees on $mk$ vertices are $k$-cordial, then so are all trees $T$ with $mk\leq |T|\leq mk + \lfloor\frac{k}{2}\rfloor+1$.
\end{lem}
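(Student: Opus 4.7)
The plan is to induct on $s := |T| - mk$, with the base case $s = 0$ being precisely the hypothesis. For the inductive step fix $1 \leq s \leq \lfloor k/2\rfloor+1$ and consider a tree $T$ on $mk+s$ vertices. Pick any leaf $u$ of $T$, with neighbor $v$, and let $T' = T - u$, a tree on $mk + s - 1$ vertices. Since $0 \leq s - 1 \leq \lfloor k/2 \rfloor$, either the hypothesis (when $s = 1$) or the inductive statement for $s - 1$ (when $s \geq 2$) supplies a $k$-cordial labeling $f$ of $T'$; set $c := f(v)$.

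Any extension of $f$ to $T$ is determined by the choice $a := f(u) \in \Z_k$, and this extension increments $v_a$ and $e_{a+c}$ each by one. To preserve $k$-cordiality on $T$, the value $a$ must lie in the set $A \subseteq \Z_k$ of labels currently at the minimum vertex count in $T'$, and $a + c$ must lie in the set $W \subseteq \Z_k$ of weights currently at the minimum edge count in $T'$. Equivalently, a legal extension exists whenever $A \cap (W - c) \neq \emptyset$, so the whole proof reduces to establishing this non-emptiness.

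Computing these sizes from the distributions forced by $k$-cordiality of $T'$: when $s = 1$ we have $|A| = k$ and $|W| = 1$ (the unique weight attaining count $m-1$, since $T'$ has $mk - 1$ edges); and when $s \geq 2$ we have $|A| = k - s + 1$ and $|W| = k - s + 2$. In the second case $|A| + |W| = 2k - 2s + 3$, which exceeds $k$ precisely when $s < (k+3)/2$, i.e.\ when $s \leq \lfloor k/2 \rfloor + 1$. Pigeonhole in $\Z_k$ then produces a valid $a$, and the labeling obtained by setting $f(u) = a$ certifies that $T$ is $k$-cordial.

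The main obstacle is this counting step, in which the bound $s \leq \lfloor k/2\rfloor + 1$ is used in a sharp way: the condition $|A| + |W| > k$ needed for the pigeonhole is equivalent to $s \leq \lfloor k/2\rfloor + 1$, and increasing $s$ by one would leave room for $A$ and $W - c$ to be disjoint, so this single leaf-extension argument cannot reach any further — which is exactly why the lemma is stated with this range.
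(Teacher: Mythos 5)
Your proof is correct and is essentially the paper's argument: the paper likewise builds up from $mk$ vertices by attaching one leaf at a time, observing that a tree on $mk+j$ vertices leaves $k-j$ admissible labels for the pendant vertex and only $j-1$ forbidden weights for the pendant edge, so a choice exists when $k-j>j-1$, which is your condition $|A|+|W|>k$ with $j=s-1$. Your write-up just makes the counting and the pigeonhole explicit (and adds the sharpness remark), so no further changes are needed.
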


\begin{proof}
Attaching a leaf to a tree with $mk+j$ vertices allows for $k-j$ labels on the pendant vertex and forbids $j-1$ weights on the pendant edge, when $j>0$. Such a labeling exists whenever $k-j>j-1$. When $j=0$ any label is allowed and there is a choice for a weight.
\end{proof}

\begin{defn}
For any tree $T$ we say $T$ is \emph{split} at roots $v_1,\dots, v_m$ if $T$ can be written as a union of a tree $T_0$ and rooted trees $T_1, \dots, T_m$, for some positive integer $m$,  with roots $v_1,\dots, v_m$, so that for any distinct $i,j\in \{0,\dots, m\}, T_i \cap T_j$ is either $v_i$ or $v_j$.
\end{defn}

\begin{figure}[ht]
\begin{center}
\begin{tikzpicture}[scale=1]
\tikzstyle{vertex}=[circle, draw, inner sep=0pt, minimum size=6pt]
\tikzstyle{vert}=[circle,fill=black,inner sep=3pt]
\tikzstyle{overt}=[circle,fill=black!30, inner sep=3pt]
\tikzstyle{root}=[rectangle,fill=black,inner sep=3pt]

  \node[vertex, label=above:\tiny{v}] (r) at (2,3) {};
  \node[vertex, label=left:\tiny{}](u1) at (1,2){};
  \node[vertex, label=left:\tiny{}](u2) at (2,2){};
  \node[vertex, label=left:\tiny{}](u3) at (3,2){};
  \node[vertex, label=left:\tiny{}](u4) at (1,1){};
  \node[vertex, label=left:\tiny{}](u5) at (2,1){};
  \node[vertex, label=left:\tiny{}](u6) at (3,1){};

  \node[vertex, label=left:\tiny{}](v1) at (0,3){};
  \node[vertex, label=left:\tiny{}](v2) at (1,3){};
  \node[vertex, label=left:\tiny{}](v3) at (3,3){};
  \node[vertex, label=left:\tiny{}](v4) at (4,3){};

 \draw[color=black] 
   (r)--(u1)--(u4) (r)--(u2)--(u5) (r)--(u3)--(u6)
   (v1)--(v2)--(r)--(v3)--(v4);

\node at (1,4) {$T$};

\node at (5,2.5) {$\longrightarrow$};

\node at (6,4) {$T_0$};

\node at (6,1) {$T_1$};

  \node[root, label=above:\tiny{$v_1$}] (v) at (9,2) {};
  \node[vertex, label=left:\tiny{}](u1) at (8,1){};
  \node[vertex, label=left:\tiny{}](u2) at (9,1){};
  \node[vertex, label=left:\tiny{}](u3) at (10,1){};
  \node[vertex, label=left:\tiny{}](u4) at (8,0){};
  \node[vertex, label=left:\tiny{}](u5) at (9,0){};
  \node[vertex, label=left:\tiny{}](u6) at (10,0){};

  \node[vertex, label=above:\tiny{v}] (r) at (9,4) {};
  \node[vertex, label=left:\tiny{}](v1) at (7,4){};
  \node[vertex, label=left:\tiny{}](v2) at (8,4){};
  \node[vertex, label=left:\tiny{}](v3) at (10,4){};
  \node[vertex, label=left:\tiny{}](v4) at (11,4){};

 \draw[color=black] 
   (v)--(u1)--(u4) (v)--(u2)--(u5) (v)--(u3)--(u6)
   (v1)--(v2)--(r)--(v3)--(v4);

\end{tikzpicture}
\caption{Splitting a Tree}
\end{center}

\end {figure}
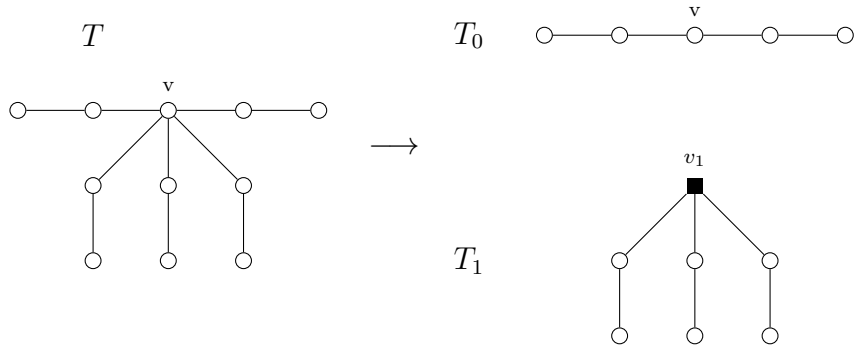

Note that when splitting a tree, one has the choice of placing branches starting at the root, in $T_0$ or in a rooted tree. 

\begin{figure}[ht]
\begin{center}
\begin{tikzpicture}[scale=1]
\tikzstyle{vertex}=[circle, draw, inner sep=0pt, minimum size=6pt]
\tikzstyle{vert}=[circle,fill=black,inner sep=3pt]
\tikzstyle{overt}=[circle,fill=black!30, inner sep=3pt]
\tikzstyle{root}=[rectangle,fill=black,inner sep=3pt]

  \node[vertex, label=above:\tiny{v}] (r) at (2,3) {};
  \node[vertex, label=left:\tiny{}](u1) at (1,2){};
  \node[vertex, label=left:\tiny{}](u2) at (2,2){};
  \node[vertex, label=left:\tiny{}](u3) at (3,2){};
  \node[vertex, label=left:\tiny{}](u4) at (1,1){};
  \node[vertex, label=left:\tiny{}](u5) at (2,1){};
  \node[vertex, label=left:\tiny{}](u6) at (3,1){};

  \node[vertex, label=left:\tiny{}](v1) at (0,3){};
  \node[vertex, label=left:\tiny{}](v2) at (1,3){};
  \node[vertex, label=left:\tiny{}](v3) at (3,3){};
  \node[vertex, label=left:\tiny{}](v4) at (4,3){};

 \draw[color=black] 
   (r)--(u1)--(u4) (r)--(u2)--(u5) (r)--(u3)--(u6)
   (v1)--(v2)--(r)--(v3)--(v4);

\node at (1,4) {$T$};

\node at (5,2.5) {$\longrightarrow$};

\node at (6,4) {$T_0$};

\node at (6,1) {$T_1$};

  \node[root, label=above:\tiny{$v_1$}] (v) at (9,2) {};
  \node[vertex, label=left:\tiny{}](u1) at (8.5,1){};
  \node[vertex, label=left:\tiny{}](u2) at (9.5,1){};
%  \node[vertex, label=left:\tiny{}](u3) at (10,1){};
  \node[vertex, label=left:\tiny{}](u4) at (8.5,0){};
  \node[vertex, label=left:\tiny{}](u5) at (9.5,0){};
%  \node[vertex, label=left:\tiny{}](u6) at (10,0){};

  \node[vertex, label=above:\tiny{v}] (r) at (9,4) {};
  \node[vertex, label=left:\tiny{}](v1) at (7,4){};
  \node[vertex, label=left:\tiny{}](v2) at (8,4){};
  \node[vertex, label=left:\tiny{}](v3) at (10,4){};
  \node[vertex, label=left:\tiny{}](v4) at (11,4){};
  \node[vertex, label=left:\tiny{}](w1) at (10,3){};
  \node[vertex, label=left:\tiny{}](w2) at (11,3){};

 \draw[color=black] 
   (v)--(u1)--(u4) (v)--(u2)--(u5)  (r)--(w1)--(w2)
   (v1)--(v2)--(r)--(v3)--(v4);

\end{tikzpicture}
\caption{Another Splitting}
\end{center}

\end {figure}
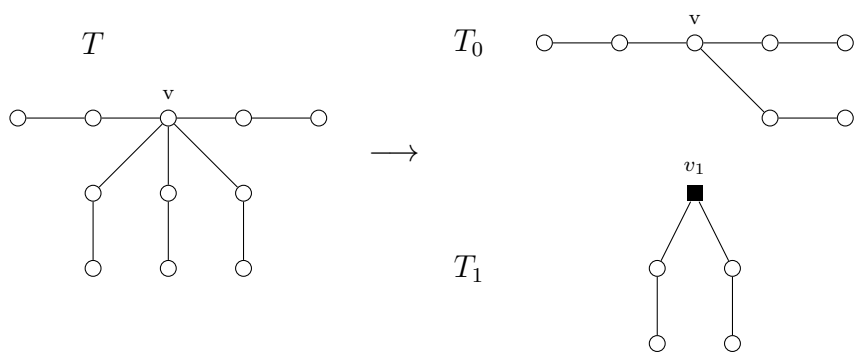

\pagebreak

\begin{prop}\label{split5}
Every tree on at least $5$ vertices can be split into a tree and either
\begin{enumerate}[label=(\roman*)]
\item a rooted tree with five vertices  
\item the rooted tree taken from $\{T'', T''', T^{iv}, T^v\}$ 
\item the rooted forest $F'$
\end{enumerate}
\end{prop}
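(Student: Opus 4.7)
The plan is to localize the split near one endpoint of a longest path of $T$. Let $P=u_0u_1\cdots u_d$ be a longest path in $T$ with $u_0$ a leaf. Maximality of $P$ forces a familiar local constraint: every off-path neighbor of $u_1$ is a leaf of $T$, and every off-path neighbor of $u_2$ has only leaves of $T$ as its further neighbors. Hence the subtree of $T$ consisting of $u_0, u_1, u_2, u_3$ together with all of their off-$P$ descendants is ``shallow,'' its structure controlled by only a few combinatorial parameters (the degrees of $u_1$ and $u_2$, and the number of leaf-parent branches attached at $u_2$).

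I would first dispose of the small-diameter case $d \le 3$. Then $T$ has at most six vertices, so by Proposition~\ref{fact1} it is a caterpillar, and the conclusion can be checked by direct inspection: either $T$ equals a rooted tree on five vertices once a single pendant leaf is reassigned to $T_0$ (case (i)), or $T$ is isomorphic to one of the four exceptional rooted trees listed in (ii), or $T$ breaks as the rooted forest $F'$ of (iii).

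For the main case $d \ge 4$, I would take $v$ to be the vertex $u_i$ of smallest index $i \in \{1, 2, 3, 4\}$ whose descendant subtree (with $T$ viewed as rooted at $u_d$) contains at least five vertices below $v$. Such $v$ exists with $i \le 4$ because $u_0, u_1, u_2, u_3$ already lie below $u_4$. If the subtree below $v$ has exactly five vertices, splitting $T$ at $v$ produces case (i) immediately. If it contains six or seven vertices, the extra vertices must accumulate as shallow off-path branches at $u_1$ or $u_2$, so only finitely many local patterns are possible. For each such pattern I would check that one of the following alternatives holds: (a) a slightly different choice of $v$, possibly an off-path vertex, peels off a five-vertex rooted subtree and gives (i); (b) the stub below $v$ is isomorphic to one of $T'', T''', T^{iv}, T^{v}$, giving (ii); or (c) splitting at two sibling vertices simultaneously produces the rooted forest $F'$ of (iii).

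The main obstacle is exhaustiveness in the last step: one must enumerate every branch pattern allowed by the longest-path restrictions near $u_1, u_2, u_3$ and verify that the exceptional list $\{T'', T''', T^{iv}, T^{v}, F'\}$ is exactly calibrated to cover the configurations in which no clean five-vertex rooted subtree can be carved off. This is a finite, essentially mechanical check once $T'', T''', T^{iv}, T^{v}$ and $F'$ are explicitly described, but the careful bookkeeping of branch degrees at $u_1$ and $u_2$ is the step where the proof must be pinned down rigorously.
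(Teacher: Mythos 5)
Your overall strategy---anchor at one end of a longest path, locate the first path vertex whose pendant subtree crosses the five-vertex threshold, and case-analyze the local configurations forced by path maximality---is the same as the paper's (the paper calls the transition a \emph{critical pair} and extracts $T''$, $T'''$, $T^{iv}$, $T^{v}$, or $F'$ from splits at $v_2$, $v_3$, or $v_4$). But several of your reductions fail as stated. First, diameter at most $3$ does not imply $|T|\le 6$: stars $K_{1,n}$ and double stars have longest paths with at most four vertices but arbitrarily many vertices, so your ``small case'' is not a finite inspection. Second, in the main case the descendant subtree of the first $u_i$ with at least five descendants need not have ``six or seven'' vertices---$u_1$ or $u_2$ may carry arbitrarily many pendant branches---so there are not finitely many local patterns in the sense you describe. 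What actually tames both situations is the flexibility built into the paper's definition of a split: at the chosen root one may place any subset of the branches into the rooted tree and leave the rest in $T_0$. The real question is whether the multiset of branch sizes at $u_i$ (each branch having depth at most $i$ by maximality of $P$) admits a subset summing to exactly $5$, and the exceptional structures in (ii) and (iii) are exactly the obstructions to such a subset sum. Your write-up never isolates this subset-selection step, and without it the claimed finiteness of the enumeration is unjustified.

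There are also boundary problems with your index bookkeeping. Your existence claim for $v=u_i$ with $i\le 4$ is false: only the four vertices $u_0,u_1,u_2,u_3$ are guaranteed to lie below $u_4$, so the threshold may first be reached at $u_5$ (the paper accordingly considers splits at $v_1,\dots,v_5$, and it is precisely the $(4,5)$ critical pair that forces the rooted \emph{forest} $F'$, split at two roots simultaneously). Moreover $P_5$ escapes both of your cases: its diameter is $4$, so it is not handled by your $d\le 3$ inspection, yet no $u_i$ has five vertices below it, and a five-vertex tree can never realize outcome (i) at all (the root of the five-vertex rooted piece must also lie in $T_0$, forcing $|T|\ge 6$); it must be recognized directly as $T''$ rooted at its center with $T_0$ a single vertex. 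None of this is fatal to the approach---the paper's own proof is terse on the same enumeration---but as written your argument does not establish that the list $\{T'',T''',T^{iv},T^{v},F'\}$ is exhaustive.
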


\begin{figure}[ht]
\begin{center}
\begin{tikzpicture}[scale=.75]
\tikzstyle{vertex}=[circle, draw, inner sep=0pt, minimum size=6pt]
\tikzstyle{vert}=[circle,fill=black,inner sep=3pt]
\tikzstyle{overt}=[circle,fill=black!30, inner sep=3pt]
\tikzstyle{root}=[rectangle,fill=black,inner sep=3pt]

  \node[root, label=above:\tiny{}] (r) at (2,3) {};
  \node[vertex, label=left:\tiny{}](u1) at (1,2){};
  \node[vertex, label=left:\tiny{}](u3) at (3,2){};
  \node[vertex, label=left:\tiny{}](u4) at (1,1){};
  \node[vertex, label=left:\tiny{}](u6) at (3,1){};
%  \node[vertex, label=left:\tiny{}](u7) at (3,0){};

 \draw[color=black] 
   (r)--(u1)--(u4)  (r)--(u3)--(u6);

\node at (1,4) {$T''$};

  \node[root, label=above:\tiny{}] (r) at (5,3) {};
  \node[vertex, label=left:\tiny{}](u1) at (5,2){};
  \node[vertex, label=left:\tiny{}](u2) at (4,1){};
  \node[vertex, label=left:\tiny{}](u3) at (5,1){};
  \node[vertex, label=left:\tiny{}](u4) at (6,1){};
%  \node[vertex, label=left:\tiny{}](u7) at (3,0){};

 \draw[color=black] 
   (r)--(u1)--(u2) (u1)--(u3) (u1)--(u4);

\node at (4.5,4) {$T'''$};

  \node[root, label=above:\tiny{}] (r) at (8,3) {};
  \node[vertex, label=left:\tiny{}](u1) at (8,2){};
  \node[vertex, label=left:\tiny{}](u2) at (8,1){};
  \node[vertex, label=left:\tiny{}](u3) at (7,0){};
  \node[vertex, label=left:\tiny{}](u4) at (9,0){};
%  \node[vertex, label=left:\tiny{}](u7) at (3,0){};

 \draw[color=black] 
   (r)--(u1)--(u2)--(u3) (u2)--(u4);

\node at (7.5,4) {$T^{iv}$};

  \node[root, label=above:\tiny{}] (r) at (10,3) {};
  \node[vertex, label=left:\tiny{}](u1) at (10,2){};
  \node[vertex, label=left:\tiny{}](u2) at (10,1){};
  \node[vertex, label=left:\tiny{}](u3) at (11,1){};
  \node[vertex, label=left:\tiny{}](u4) at (10,0){};
%  \node[vertex, label=left:\tiny{}](u7) at (3,0){};

 \draw[color=black] 
   (r)--(u1)--(u2)--(u4) (u1)--(u3);

\node at (10,4) {$T^v$};

  \node[root, label=above:\tiny{}] (r1) at (1,-1) {};
  \node[vertex, label=left:\tiny{}](u1) at (1,-2){};
  \node[vertex, label=left:\tiny{}](u2) at (1,-3){};

  \node[root, label=left:\tiny{}](r2) at (2,-1){};
  \node[vertex, label=left:\tiny{}](u4) at (3,-1.75){};
  \node[vertex, label=left:\tiny{}](u5) at (2,-2.5){};
  \node[vertex, label=left:\tiny{}](u6) at (3,-3.25){};

 \draw[color=black] 
   (r1)--(u1)--(u2)  (r2)--(u4)--(u5)--(u6);

\node at (1,-4) {$F'$};

\end{tikzpicture}
%\caption{The tree $T'$ and forest $F$}
\end{center}

\end {figure}

\begin{proof}
Let $T$ be a tree on at least $5$ vertices and let $P=\{v_0,v_1,\dots, v_p\}$ be a longest path in $T$. We attempt to split $T$ at $v_i$ for $i=1,2,3,4,5$. Notice that if $T$ cannot be split to a rooted tree with $5$ vertices, then for some $i$, $1\leq i \leq 4$, there is a split at $v_i$ that produces a rooted tree that has less than $5$ vertices, and every split at $v_{i+1}$ produces a rooted tree with more than $5$ vertices. We call such a pair of vertices with indices $(i,i+1)$ \emph{critical}, such a split at $v_i$ \emph{deficient} and such a split at $v_{i+1}$ \emph{excessive}.

We only need consider critical pairs of vertices with indices $(2,3), (3,4),$ and $(4,5)$. For the $(2,3)$ critical pair, it is easy to see that one can produce the rooted trees $T''$ and $T'''$ by splitting at $v_2$. For the $(3,4)$ critical pair, we can produce $T^{iv}$ and $T^v$ by splitting at $v_3$. For the $(4,5)$ critical pair, one can produce rooted forest $F'$ by splitting at $v_4$.
\end{proof}

%\pagebreak

\begin{prop}\label{split6}
Every tree on at least $6$ vertices can be split into a tree and either
\begin{enumerate}[label=(\roman*)]
\item a rooted tree with six vertices  
\item the rooted tree taken from $\{T', T'_2, T'_3,T'_4\}$ 
\item the rooted forest taken from $\{F, F_2,F_3,F_4\}$
\end{enumerate}
\end{prop}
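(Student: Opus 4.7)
The plan is to follow the template of Proposition \ref{split5}. Let $P=v_0,v_1,\ldots,v_p$ be a longest path in $T$. For each $i$ with $1\le i\le 6$, I would attempt to split $T$ at $v_i$, using the flexibility (noted in the remark between the two splitting figures) that branches hanging at $v_i$ may be placed either in $T_0$ or in the rooted tree. If for some $i$ and some distribution of those branches the rooted piece has exactly six non-root vertices, then case (i) is achieved and we are done.

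If no such split exists, then there is a smallest $i$ with $1\le i\le 5$ such that a split at $v_i$ can be made \emph{deficient} (the rooted piece has fewer than six vertices) while every split at $v_{i+1}$ is \emph{excessive} (more than six vertices). As in Proposition \ref{split5}, call such $(i,i+1)$ a critical pair. The indices $1$ and $2$ cannot give excessive splits (since $P$ is a longest path, branches at $v_1$ are leaves and branches at $v_2$ are paths of length at most one), so the critical pairs to analyze are $(2,3),(3,4),(4,5)$, and $(5,6)$. For each of these I would enumerate all possible local shapes of $T$ near $v_i$, using the crucial fact that because $P$ is longest, every branch off $v_j$ for $j\le i$ that is not on $P$ has depth at most $j-1$. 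This uniformly bounds the number and size of side-branches accumulated up to $v_i$.

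For the small-index pairs $(2,3)$ and $(3,4)$ the deficient piece is a single rooted tree, which I expect to fall into the list $\{T',T'_2,T'_3,T'_4\}$. For the larger-index pairs $(4,5)$ and $(5,6)$, the gap between the deficient size and $6$ is large enough that when we separate off an appropriate side-branch at $v_i$ (using the flexibility of splitting), the remaining piece may contain a ``leftover'' subtree on the $P$-side that is naturally treated as its own rooted tree, producing the rooted forests $\{F,F_2,F_3,F_4\}$. In each case the enumeration is driven by three bookkeeping items: (a) the total size of the deficient piece, (b) the size of the subtree that causes the jump to excessive, and (c) the restricted set of shapes allowed at the low-index vertices by the longest-path constraint.

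The main obstacle, exactly as in the proof of Proposition \ref{split5}, is to verify that the case analysis is exhaustive: every local configuration at a critical pair must actually match one of the eight listed structures. I would handle this by ordering the cases by the index $i$ of the deficient vertex and, within each $i$, by the multiset of depths and multiplicities of the branches at $v_1,\ldots,v_i$, checking that each leftover case after eliminating ``6-vertex'' splits can be repackaged (by reassigning which branches at $v_i$ sit in $T_0$ versus the rooted piece) into one of $T',T'_2,T'_3,T'_4,F,F_2,F_3,F_4$. The remainder of the argument is bookkeeping rather than hidden mathematical content.
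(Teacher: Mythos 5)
Your overall strategy -- take a longest path $P=v_0,\dots,v_p$, look for an exact six-vertex split at some $v_i$, and otherwise locate a critical pair $(i,i+1)$ and classify what the deficient split can look like -- is exactly the paper's, so the framework is fine. The problem is that your predicted outcome of the case analysis is wrong, and in a way that shows the enumeration (which is the entire content of the proposition) has not actually been carried out. A rooted tree obtained by splitting at $v_i$ in the direction of $v_0$ has depth at most $i$ from its root, since $P$ is a longest path. The trees $T'_2,T'_3,T'_4$ each contain a path of length $4$ from the root, so they can only arise from a split at $v_4$, i.e.\ from the critical pair $(4,5)$ -- not from the ``small-index pairs $(2,3)$ and $(3,4)$'' as you assert. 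Dually, the forests $F_2,F_3,F_4$, whose larger components have depth at most $3$, arise precisely at the $(3,4)$ pair: there the piece below $v_3$ along $P$ contributes only $v_2,v_1,v_0$, and when the remaining branches at $v_3$ cannot top this up to six vertices one must split off a second two-vertex rooted piece at another root, which is exactly what forces the forest cases at the \emph{low} index, not at $(4,5)$ and $(5,6)$ as you predict. The paper's tally is: $(2,3)$ cannot be critical; $(3,4)$ yields $T'$ or $F_2,F_3,F_4$; $(4,5)$ yields $F$ or $T'_2,T'_3,T'_4$; $(5,6)$ yields $F$.

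Because your assignment of shapes to critical pairs is essentially inverted, the concluding step -- ``check that each leftover case can be repackaged into one of the eight listed structures'' -- would not go through as planned; you would discover configurations (e.g.\ a depth-$4$ deficient piece at $v_4$) that your scheme routes to the wrong part of the list. Dismissing this as ``bookkeeping rather than hidden mathematical content'' is exactly where the gap lies: the proposition \emph{is} the bookkeeping, namely the verification that the specific eight structures $T',T'_2,T'_3,T'_4,F,F_2,F_3,F_4$ are exhaustive, and your sketch does not establish (or even correctly anticipate) that exhaustiveness. You also leave unaddressed why $(2,3)$ cannot be critical, which the paper asserts and which requires an argument about the sizes of branches available at $v_2$ and $v_3$.
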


\begin{figure}[ht]
\begin{center}
\begin{tikzpicture}[scale=.5]
\tikzstyle{vertex}=[circle, draw, inner sep=0pt, minimum size=6pt]
\tikzstyle{vert}=[circle,fill=black,inner sep=3pt]
\tikzstyle{overt}=[circle,fill=black!30, inner sep=3pt]
\tikzstyle{root}=[rectangle,fill=black,inner sep=3pt]

  \node[root, label=above:\tiny{}] (r) at (2,7) {};
  \node[vertex, label=left:\tiny{}](u1) at (1,6){};
  \node[vertex, label=left:\tiny{}](u3) at (3,6){};
  \node[vertex, label=left:\tiny{}](u4) at (1,5){};
  \node[vertex, label=left:\tiny{}](u6) at (3,5){};
  \node[vertex, label=left:\tiny{}](u7) at (3,4){};

 \draw[color=black] 
   (r)--(u1)--(u4)  (r)--(u3)--(u6)--(u7);

\node at (1,8) {$T'$};

  \node[root, label=above:\tiny{}] (r) at (6,7) {};
  \node[vertex, label=left:\tiny{}](u1) at (6,6){};
  \node[vertex, label=left:\tiny{}](u2) at (6,5){};
  \node[vertex, label=left:\tiny{}](u3) at (6,4){};
  \node[vertex, label=left:\tiny{}](u4) at (6,3){};
  \node[vertex, label=left:\tiny{}](u5) at (7,5){};

 \draw[color=black] 
   (r)--(u1)--(u2)--(u3)--(u4) (u1)--(u5);

\node at (5,8) {$T'_2$};

  \node[root, label=above:\tiny{}] (r) at (10,7) {};
  \node[vertex, label=left:\tiny{}](u1) at (10,6){};
  \node[vertex, label=left:\tiny{}](u2) at (10,5){};
  \node[vertex, label=left:\tiny{}](u3) at (10,4){};
  \node[vertex, label=left:\tiny{}](u4) at (10,3){};
  \node[vertex, label=left:\tiny{}](u5) at (11,4){};

 \draw[color=black] 
   (r)--(u1)--(u2)--(u3)--(u4) (u2)--(u5);

\node at (9,8) {$T'_3$};

  \node[root, label=above:\tiny{}] (r) at (14,7) {};
  \node[vertex, label=left:\tiny{}](u1) at (14,6){};
  \node[vertex, label=left:\tiny{}](u2) at (14,5){};
  \node[vertex, label=left:\tiny{}](u3) at (14,4){};
  \node[vertex, label=left:\tiny{}](u4) at (14,3){};
  \node[vertex, label=left:\tiny{}](u5) at (15,3){};

 \draw[color=black] 
   (r)--(u1)--(u2)--(u3)--(u4) (u3)--(u5);

\node at (13,8) {$T'_4$};

  \node[root, label=above:\tiny{}] (r1) at (1,2) {};
  \node[vertex, label=left:\tiny{}](u1) at (1,1){};
  \node[vertex, label=left:\tiny{}](u2) at (1,0){};

  \node[root, label=left:\tiny{}](r2) at (2,2){};
  \node[vertex, label=left:\tiny{}](u4) at (3,1.25){};
  \node[vertex, label=left:\tiny{}](u5) at (2,.5){};
  \node[vertex, label=left:\tiny{}](u6) at (3,-.25){};
  \node[vertex, label=left:\tiny{}](u7) at (2,-1){};

 \draw[color=black] 
   (r1)--(u1)--(u2)  (r2)--(u4)--(u5)--(u6)--(u7);

\node at (1,-2) {$F$};

  \node[root, label=above:\tiny{}] (r) at (6,2) {};
  \node[vertex, label=left:\tiny{}](u3) at (6,1){};
  \node[vertex, label=left:\tiny{}](u4) at (5,-1){};
  \node[vertex, label=left:\tiny{}](u6) at (6,0){};
  \node[vertex, label=left:\tiny{}](u7) at (7,-1){};

 \draw[color=black] 

  (r)--(u3)--(u6)--(u7) (u6)--(u4);

  \node[root, label=above:\tiny{}] (r1) at (7,2) {};
  \node[vertex, label=left:\tiny{}](u1) at (7,1){};
  \node[vertex, label=left:\tiny{}](u2) at (7,0){};

 \draw[color=black] 
   (r1)--(u1)--(u2) ;

\node at (5,-2) {$F_2$};

  \node[root, label=above:\tiny{}] (r) at (10,2) {};
  \node[vertex, label=left:\tiny{}](u1) at (10,1){};
  \node[vertex, label=left:\tiny{}](u2) at (9,0){};
  \node[vertex, label=left:\tiny{}](u3) at (10,0){};
  \node[vertex, label=left:\tiny{}](u4) at (10,-1){};

 \draw[color=black] 

  (r)--(u1)--(u3)--(u4) (u1)--(u2);

  \node[root, label=above:\tiny{}] (r1) at (11,2) {};
  \node[vertex, label=left:\tiny{}](u1) at (11,1){};
  \node[vertex, label=left:\tiny{}](u2) at (11,0){};

 \draw[color=black] 
   (r1)--(u1)--(u2) ;

\node at (9,-2) {$F_3$};

  \node[root, label=above:\tiny{}] (r) at (14,2) {};
  \node[vertex, label=left:\tiny{}](u1) at (13,1){};
  \node[vertex, label=left:\tiny{}](u2) at (15,1){};
  \node[vertex, label=left:\tiny{}](u3) at (13,0){};
  \node[vertex, label=left:\tiny{}](u4) at (15,0){};

 \draw[color=black] 

  (r)--(u1)--(u3) (r)--(u2)--(u4);

  \node[root, label=above:\tiny{}] (r1) at (16,2) {};
  \node[vertex, label=left:\tiny{}](u1) at (16,1){};
  \node[vertex, label=left:\tiny{}](u2) at (16,0){};

 \draw[color=black] 
   (r1)--(u1)--(u2) ;

\node at (13,-2) {$F_4$};

\end{tikzpicture}
%\caption{The tree $T'$ and forest $F$}
\end{center}

\end{figure}

\begin{proof}
The argument is similar to that in Proposition \ref{split5}. Let \linebreak$P=\{v_0,\dots, v_p\}$ be a longest path of $T$ and consider critical pairs with indices $(2,3),(3,4),(4,5),$ and $(5,6)$. There can be no critical pair with indices $(2,3)$. Critical pairs with indices $(3,4)$ produce either $T', F_2, F_3,$ or $F_4$ when splitting at $v_3$. Critical pairs with indices $(4,5)$  produce $F, T'_2,T'_3,$ or $T'_4$ when splitting the tree at $v_4$. Critical pairs with indices $(5,6)$ produce $F$ when splitting the tree at $v_5$.

\end{proof}

\section{Six-Cordial Trees}

Theorem \ref{test} implies that for any integer $k>0$, if all trees and rooted forests with $k$ vertices are $k$-cordial, then all trees are $k$-cordial. This yields a method to check if trees are $k$-cordial for any integer $k$. We shorten and simplify this test for the case $k=6$.

\begin{thm}
Every tree is $6$-cordial
\end{thm}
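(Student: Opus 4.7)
The plan is strong induction on $|T|$. The base case $|T|\leq 6$ is immediate from Proposition~\ref{fact1} (every such tree is a caterpillar) and Theorem~\ref{cat}. For the inductive step, suppose every tree on fewer than $|T|$ vertices is $6$-cordial, and proceed according to the residue of $|T|$ modulo $6$.

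The principal case is $|T|\equiv 0\pmod 6$, handled via Proposition~\ref{split6}. Write $T=T_0\cup P$, where $T_0$ is a smaller tree and $P$ is either an arbitrary rooted tree on six non-root vertices, one of the specific rooted trees in $\{T',T'_2,T'_3,T'_4\}$, or one of the specific rooted forests in $\{F,F_2,F_3,F_4\}$. The inductive hypothesis provides a $6$-cordial labeling of $T_0$, which in particular assigns labels to the root(s) of $P$. It then suffices to verify that each such $P$ is $6$-cordial as a rooted forest in Hovey's sense: for every root labeling and every choice of excess weight $\ell\in\mathbb{Z}_6$, there is a labeling of the non-root vertices of $P$ making the combined labeling of $T$ globally $6$-cordial. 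For the generic piece in case~(i), $P$ is a rooted caterpillar on six vertices, and a tuned variant of Grace's sequential algorithm together with Proposition~\ref{fact2} produces the required labeling.

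For residues $|T|\equiv 1,2,3,4\pmod 6$, I would apply Lemma~\ref{hovey} directly: iteratively remove a leaf, invoke the inductive hypothesis on the resulting smaller tree, and extend across the deleted leaf using the pigeonhole bound in the lemma's proof. The residue $|T|\equiv 5\pmod 6$ falls outside the range of Lemma~\ref{hovey} (since $k-j\leq j-1$ when $k=6$ and $j=5$), so I would instead invoke Proposition~\ref{split5}: write $T=T_0\cup P'$ with $|T_0|\equiv 0\pmod 6$ and $P'$ either an arbitrary rooted tree on five non-root vertices, one of $\{T'',T''',T^{iv},T^v\}$, or the rooted forest $F'$. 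The inductive hypothesis handles $T_0$, and the analysis of $P'$ parallels the principal case.

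The main obstacle is the explicit case analysis verifying rooted-$6$-cordiality of the specific pieces $T',T'_2,T'_3,T'_4,F,F_2,F_3,F_4$ from Proposition~\ref{split6} and $T'',T''',T^{iv},T^v,F'$ from Proposition~\ref{split5}. In principle each piece admits up to $6\times 6=36$ combinations of root label and excess weight $\ell$ (more for forests with multiple roots), but the shift and negation symmetries supplied by Lemmas~\ref{add} and~\ref{unitmult} collapse the casework substantially. Each reduced subcase still requires exhibiting an explicit $6$-cordial extension, and it is the cumulative volume of these constructions, together with the careful bookkeeping needed to ensure that the excess weight $\ell$ in the piece matches the deficit in $T_0$, that forms the technical core of the proof.
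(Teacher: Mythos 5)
Your skeleton is exactly the paper's: induct on $|T|$, dispose of $|T|\le 6$ via Propositions \ref{fact1} and Theorem \ref{cat}, cover residues $1,\dots,4$ with Lemma \ref{hovey}, and attack residues $0$ and $5$ with Propositions \ref{split6} and \ref{split5} by relabeling the split-off piece so that its weight multiset compensates the weight deficit of $T_0$. (One bookkeeping point you have right that is easy to get wrong: since a $6$-cordial $T_0$ on $6(m-1)$ vertices has one weight occurring $m-2$ times and five occurring $m-1$ times, the six edges of the attached piece must either realize all six weights once, or realize the deficient weight of $T_0$ twice while omitting exactly one other weight; this is precisely Hovey's rooted condition with $\ell$ equal to that deficient weight.)

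There are, however, two genuine gaps. First, your claim that ``the generic piece in case (i) is a rooted caterpillar on six vertices, and a tuned variant of Grace's algorithm together with Proposition \ref{fact2} produces the required labeling'' is false as stated: Proposition \ref{fact2} applies only to rooted caterpillars whose root sits at distance one from an endpoint of a longest path, and many six-vertex rooted pieces produced by case (i) of Proposition \ref{split6} are not of this form (for instance a root of degree two with two branches of lengths $2$ and $4$, or a root of degree three). The paper isolates eight such exceptional shapes ($a$ through $h$) and must supply, for each, either a labeling realizing every weight exactly once or six separate labelings, one per prescribed majority weight; no variant of the sequential labeling does this uniformly. Second, and more fundamentally, the finite verification you defer as ``the technical core'' is the entire content of the theorem: the structural reductions are all quoted from Hovey, so a proof that does not exhibit the explicit labelings for the exceptional rooted trees and forests (the paper's Lists $1$--$5$, well over a hundred labelings) has not actually proved anything new. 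Note also that Lemma \ref{add} only normalizes one root label to $0$; it does nothing to reduce the six choices of target majority weight, and for the two-rooted forests $F,F_2,F_3,F_4$ the second root's label remains a free parameter, so the casework does not collapse nearly as much as you suggest.
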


\begin{proof}
We induct on the order of any tree $T$. If $|T|\leq 6$, then by Proposition \ref{fact1} and Theorem \ref{cat}, $T$ is $6$-cordial. Next suppose all trees on $6(m-1)$ vertices are $6$-cordial. By Lemma \ref{hovey} it is enough to show that all trees on $6(m-1)+5$ vertices and all trees on $6m$ vertices are $6$-cordial. Let $T$ be a tree on $6m$ vertices. We apply Proposition \ref{split6} and consider case $(i)$. That is, suppose $T$ splits into a tree $T_0$ on $6(m-1)$ vertices and a rooted tree $T_1$ with root $v$, on six vertices. We consider the degrees of $v$ in $T_0$.

If $\deg(v)=1$, we can apply Lemma  \ref{add} to ``rotate'' the labels of $T_0$ so that the root vertex takes the label 0. If $w$ is the minority weight of $T_0$, we label the caterpillar $T_1$ by Grace's algorithm with the root neighbor labeled $w$.

If $\deg(v)=2$, we apply Proposition \ref{fact2} and notice that we only need to show the following rooted trees $6$-cordial:

%\pagebreak

\begin{figure}[ht]
\begin{center}
\begin{tikzpicture}[scale=.9]
\tikzstyle{vertex}=[circle, draw, inner sep=0pt, minimum size=6pt]
\tikzstyle{vert}=[circle,fill=black,inner sep=3pt]
\tikzstyle{overt}=[circle,fill=black!30, inner sep=3pt]
\tikzstyle{root}=[rectangle,fill=black,inner sep=3pt]

  \node[root, label=above:\tiny{}] (r) at (2,3) {};
  \node[vertex, label=left:\tiny{}](u1) at (1,2){};
  \node[vertex, label=left:\tiny{}](u3) at (3,2){};
  \node[vertex, label=left:\tiny{}](u4) at (1,1){};
  \node[vertex, label=left:\tiny{}](u6) at (3,1){};
  \node[vertex, label=left:\tiny{}](u7) at (3,0){};
  \node[vertex, label=left:\tiny{}](u8) at (3,-1){};

 \draw[color=black] 
   (r)--(u1)--(u4)  (r)--(u3)--(u6)--(u7)--(u8);

\node at (1,3) {$a.$};

  \node[root, label=above:\tiny{}] (r) at (5,3) {};
  \node[vertex, label=left:\tiny{}](u1) at (4,2){};
  \node[vertex, label=left:\tiny{}](u3) at (6,2){};
  \node[vertex, label=left:\tiny{}](u4) at (4,1){};
  \node[vertex, label=left:\tiny{}](u6) at (6,1){};
  \node[vertex, label=left:\tiny{}](u7) at (6,0){};
  \node[vertex, label=left:\tiny{}](u8) at (4,0){};

 \draw[color=black] 
   (r)--(u1)--(u4)--(u8)  (r)--(u3)--(u6)--(u7);

\node at (4,3) {$b.$};

  \node[root, label=above:\tiny{}] (r) at (8.5,3) {};
  \node[vertex, label=left:\tiny{}](u1) at (7.5,2){};
  \node[vertex, label=left:\tiny{}](u3) at (9.5,2){};
  \node[vertex, label=left:\tiny{}](u4) at (7.5,1){};
  \node[vertex, label=left:\tiny{}](u6) at (9.5,1){};
  \node[vertex, label=left:\tiny{}](u7) at (7,1){};
  \node[vertex, label=left:\tiny{}](u8) at (10,1){};

 \draw[color=black] 
  (r)--(u1)--(u4)  (u1)--(u7)  (r)--(u3)--(u6) (u3)--(u8);

\node at (7.5,3) {$c.$};

  \node[root, label=above:\tiny{}] (r) at (5,-1) {};
  \node[vertex, label=left:\tiny{}](u1) at (4,-2){};
  \node[vertex, label=left:\tiny{}](u3) at (6,-2){};
  \node[vertex, label=left:\tiny{}](u4) at (4,-3){};
  \node[vertex, label=left:\tiny{}](u6) at (6,-3){};
  \node[vertex, label=left:\tiny{}](u7) at (3.5,-3){};
  \node[vertex, label=left:\tiny{}](u8) at (3,-3){};

 \draw[color=black] 
   (r)--(u1)--(u4)  (r)--(u3)--(u6) (u1)--(u7) (u1)--(u8);

\node at (4,-1) {$d.$};

  \node[root, label=above:\tiny{}] (r) at (8.5,-1) {};
  \node[vertex, label=left:\tiny{}](u1) at (7.5,-2){};
  \node[vertex, label=left:\tiny{}](u3) at (9.5,-2){};
  \node[vertex, label=left:\tiny{}](u4) at (7.5,-3){};
  \node[vertex, label=left:\tiny{}](u6) at (9.5,-3){};
  \node[vertex, label=left:\tiny{}](u7) at (7,-3){};
  \node[vertex, label=left:\tiny{}](u8) at (9.5,-4){};

 \draw[color=black] 
  (r)--(u1)--(u4)  (u1)--(u7)  (r)--(u3)--(u6)--(u8);

\node at (7.5,-1) {$e.$};

\end{tikzpicture}
\end{center}

\end {figure}

If $\deg(v)=3$, we apply Proposition \ref{fact2} and notice that we only need to show the following rooted trees $6$-cordial: 

\begin{figure}[ht]
\begin{center}
\begin{tikzpicture}[scale=1]
\tikzstyle{vertex}=[circle, draw, inner sep=0pt, minimum size=6pt]
\tikzstyle{vert}=[circle,fill=black,inner sep=3pt]
\tikzstyle{overt}=[circle,fill=black!30, inner sep=3pt]
\tikzstyle{root}=[rectangle,fill=black,inner sep=3pt]

  \node[root, label=above:\tiny{}] (r) at (2,3) {};
  \node[vertex, label=left:\tiny{}](u1) at (1,2){};
  \node[vertex, label=left:\tiny{}](u3) at (3,2){};
  \node[vertex, label=left:\tiny{}](u4) at (2,2){};
  \node[vertex, label=left:\tiny{}](u6) at (3,1){};
  \node[vertex, label=left:\tiny{}](u7) at (3,0){};
  \node[vertex, label=left:\tiny{}](u8) at (2,1){};

 \draw[color=black] 
   (r)--(u1) (r)--(u4)--(u8)  (r)--(u3)--(u6)--(u7);

\node at (1,3) {$f.$};

  \node[root, label=above:\tiny{}] (r) at (5,3) {};
  \node[vertex, label=left:\tiny{}](u1) at (4,2){};
  \node[vertex, label=left:\tiny{}](u3) at (6,2){};
  \node[vertex, label=left:\tiny{}](u4) at (4,1){};
  \node[vertex, label=left:\tiny{}](u6) at (6,1){};
  \node[vertex, label=left:\tiny{}](u7) at (5,2){};
  \node[vertex, label=left:\tiny{}](u8) at (5,1){};

 \draw[color=black] 
   (r)--(u1)--(u4)  (r)--(u3)--(u6) (r)--(u7)--(u8);

\node at (4,3) {$g.$};

\end{tikzpicture}
\end{center}

\end {figure}

If $\deg(v)=4,5,6$, we apply Proposition \ref{fact2} and notice that we only need to show the following rooted tree $6$-cordial:

\begin{figure}[ht]
\begin{center}
\begin{tikzpicture}[scale=1]
\tikzstyle{vertex}=[circle, draw, inner sep=0pt, minimum size=6pt]
\tikzstyle{vert}=[circle,fill=black,inner sep=3pt]
\tikzstyle{overt}=[circle,fill=black!30, inner sep=3pt]
\tikzstyle{root}=[rectangle,fill=black,inner sep=3pt]

  \node[root, label=above:\tiny{}] (r) at (2,3) {};
  \node[vertex, label=left:\tiny{}](u1) at (1,2){};
  \node[vertex, label=left:\tiny{}](u3) at (3,2){};
  \node[vertex, label=left:\tiny{}](u4) at (2,2){};
  \node[vertex, label=left:\tiny{}](u6) at (3,1){};
  \node[vertex, label=left:\tiny{}](u7) at (2,1){};
  \node[vertex, label=left:\tiny{}](u8) at (0,2){};

 \draw[color=black] 
   (r)--(u1) (r)--(u4)--(u7)  (r)--(u3)--(u6) (r)--(u8);

\node at (1,3) {$h.$};

\end{tikzpicture}
\end{center}

\end {figure}

For the representations of the rooted trees $a$ through $h$ above, we define the level $\ell_0$ as the root. For any $i>0$, the level $\ell_i$ is defined to be those vertices of distance $i$ from the root, ordered from left to right as in the representation above. Using this notation, we provide a $6$-cordial labeling for each rooted tree, so that either every weight appears once or for every weight there is a labeling of  each rooted tree with that weight serving as a majority weight. For each case above, our labeling will be on vertices starting with the root and continuing to subsequent levels, from left to right. We end each labeling by listing which weight is in the majority. These labeling can be found in List $1$ at the end of the proof.
\medskip

Suppose $T$ splits into $T_0$ on $6(m-1)$ vertices and one of the above rooted trees $T_1$. By induction, we can label $T_0$ $6$-cordially with some minority weight $w$. However, we have shown that no matter the value of $w$, we can make $w$ a majority weight of $T_1$ or there is a labeling of $T_1$ with no majority weight. Pasting $T$ back together with this labeling produces a $6$-cordial labeling of $T$.

Next, we consider case $(ii)$ of Proposition \ref{split6}. We label the rooted trees $T',T'_2,T'_3,T'_4$ so that no label or weight appears more than once, and for every label there is a labeling with that label not present. These labelings can be found in List $2$ at the end of the proof.

Since $T_0$ is of order $6m+1$, we can label it $6$-cordially by the induction hypothesis and Hovey's lemma. This means that no weight on $T_0$ is in the minority and one label is in the majority. For the minority label in one of the cordial labelings of the rooted tree $T'$, we choose the majority label of $T_0$. Pasting $T$ back together produces a $6$-cordial labeling of $T$.

We now consider case $(iii)$ of Proposition \ref{split6}. We label the rooted forests $F,F_2,F_3,F_4$ so that zero appears on the left root and consider all other possible labels on the other root. These labelings can be found in List $3$ at the end of the proof.

For trees on $6(m-1)+5$ vertices, we can use many of the previous labelings on trees with $6m$ vertices. We argue as in the case of $6m$ vertices.

Let $T$ be a tree on $6(m-1)+5$ vertices. We apply Proposition \ref{split5} and consider case $(i)$. That is, suppose $T$ splits into a tree $T_0$ on $6(m-1)$ vertices and a rooted tree $T_1$ with root $v$, on five vertices. We consider the degrees of $v$ in $T_0$.

If $\deg(v)=1$, we can apply Lemma  \ref{add} to ``rotate'' the labels of $T_0$ so that the root vertex takes the label 0. If $w$ is the minority weight of $T_0$, we label the caterpillar $T_1$ by Grace's algorithm with the root neighbor labeled $w$.

If $\deg(v)=2$, we consider the following trees:

\begin{figure}[ht]
\begin{center}
\begin{tikzpicture}[scale=.7]
\tikzstyle{vertex}=[circle, draw, inner sep=0pt, minimum size=6pt]
\tikzstyle{vert}=[circle,fill=black,inner sep=3pt]
\tikzstyle{overt}=[circle,fill=black!30, inner sep=3pt]
\tikzstyle{root}=[rectangle,fill=black,inner sep=3pt]

  \node[root, label=above:\tiny{}] (r) at (2,3) {};
  \node[vertex, label=left:\tiny{}](u1) at (1,2){};
  \node[vertex, label=left:\tiny{}](u3) at (3,2){};
  \node[vertex, label=left:\tiny{}](u4) at (1,1){};
  \node[vertex, label=left:\tiny{}](u6) at (3,1){};
  \node[vertex, label=left:\tiny{}](u7) at (3,0){};
%  \node[vertex, label=left:\tiny{}](u8) at (3,-1){};

 \draw[color=black] 
   (r)--(u1)--(u4)  (r)--(u3)--(u6)--(u7);

\node at (1,3) {$i.$};

  \node[root, label=above:\tiny{}] (r) at (5,3) {};
  \node[vertex, label=left:\tiny{}](u1) at (4,2){};
  \node[vertex, label=left:\tiny{}](u3) at (6,2){};
  \node[vertex, label=left:\tiny{}](u4) at (4,1){};
  \node[vertex, label=left:\tiny{}](u6) at (6,1){};
  \node[vertex, label=left:\tiny{}](u7) at (6.5,1){};
%  \node[vertex, label=left:\tiny{}](u7) at (6,0){};
%  \node[vertex, label=left:\tiny{}](u8) at (4,0){};

 \draw[color=black] 
   (r)--(u1)--(u4)  (r)--(u3)--(u6) (u3)--(u7);

\node at (4,3) {$j.$};

  \node[root, label=above:\tiny{}] (r) at (8,3) {};
  \node[vertex, label=left:\tiny{}](u1) at (7,2){};
  \node[vertex, label=left:\tiny{}](u3) at (9,2){};
  %\node[vertex, label=left:\tiny{}](u4) at (1,1){};
  \node[vertex, label=left:\tiny{}](u6) at (9,1){};
  \node[vertex, label=left:\tiny{}](u7) at (9,0){};
  \node[vertex, label=left:\tiny{}](u8) at (9,-1){};

 \draw[color=black] 
   (r)--(u1)  (r)--(u3)--(u6)--(u7)--(u8);

\node at (7.5,3) {$k.$};

  \node[root, label=above:\tiny{}] (r) at (2,-1) {};
  \node[vertex, label=left:\tiny{}](u1) at (1,-2){};
  \node[vertex, label=left:\tiny{}](u3) at (3,-2){};
  %\node[vertex, label=left:\tiny{}](u4) at (1,1){};
  \node[vertex, label=left:\tiny{}](u6) at (3,-3){};
  \node[vertex, label=left:\tiny{}](u7) at (3,-4){};
  \node[vertex, label=left:\tiny{}](u8) at (3.5,-4){};

 \draw[color=black] 
   (r)--(u1)  (r)--(u3)--(u6)--(u7) (u6)--(u8);

\node at (1,-1) {$\ell.$};

  \node[root, label=above:\tiny{}] (r) at (5,-1) {};
  \node[vertex, label=left:\tiny{}](u1) at (4,-2){};
  \node[vertex, label=left:\tiny{}](u3) at (6,-2){};
  %\node[vertex, label=left:\tiny{}](u4) at (1,1){};
  \node[vertex, label=left:\tiny{}](u6) at (6,-3){};
  \node[vertex, label=left:\tiny{}](u7) at (6,-4){};
  \node[vertex, label=left:\tiny{}](u8) at (6.5,-3){};

 \draw[color=black] 
   (r)--(u1)  (r)--(u3)--(u6)--(u7) (u3)--(u8);

\node at (4,-1) {$m.$};

  \node[root, label=above:\tiny{}] (r) at (8,-1) {};
  \node[vertex, label=left:\tiny{}](u1) at (7,-2){};
  \node[vertex, label=left:\tiny{}](u3) at (9,-2){};
  %\node[vertex, label=left:\tiny{}](u4) at (1,1){};
  \node[vertex, label=left:\tiny{}](u6) at (9,-3){};
  \node[vertex, label=left:\tiny{}](u7) at (8.5,-3){};
  \node[vertex, label=left:\tiny{}](u8) at (9.5,-3){};

 \draw[color=black] 
   (r)--(u1)  (r)--(u3)--(u6) (u7)--(u3)--(u8);

\node at (7.5,-1) {$n.$};

\end{tikzpicture}
\end{center}

\end {figure}

If $\deg(v)=3$, we consider the next two trees:

\begin{figure}[ht]
\begin{center}
\begin{tikzpicture}[scale=1]
\tikzstyle{vertex}=[circle, draw, inner sep=0pt, minimum size=6pt]
\tikzstyle{vert}=[circle,fill=black,inner sep=3pt]
\tikzstyle{overt}=[circle,fill=black!30, inner sep=3pt]
\tikzstyle{root}=[rectangle,fill=black,inner sep=3pt]

  \node[root, label=above:\tiny{}] (r) at (2,3) {};
  \node[vertex, label=left:\tiny{}](u1) at (1,2){};
  \node[vertex, label=left:\tiny{}](u2) at (2,2){};
  \node[vertex, label=left:\tiny{}](u3) at (3,2){};
  \node[vertex, label=left:\tiny{}](u4) at (2,1){};
  \node[vertex, label=left:\tiny{}](u6) at (3,1){};

 \draw[color=black] 
   (r)--(u1) (r)--(u2)--(u4)  (r)--(u3)--(u6);

\node at (1,3) {$o.$};

  \node[root, label=above:\tiny{}] (r) at (5,3) {};
  \node[vertex, label=left:\tiny{}](u1) at (4,2){};
  \node[vertex, label=left:\tiny{}](u2) at (5,2){};

  \node[vertex, label=left:\tiny{}](u3) at (6,2){};
  \node[vertex, label=left:\tiny{}](u4) at (6,1){};
  \node[vertex, label=left:\tiny{}](u6) at (6.5,1){};

%  \node[vertex, label=left:\tiny{}](u7) at (6,0){};
%  \node[vertex, label=left:\tiny{}](u8) at (4,0){};

 \draw[color=black] 
   (r)--(u1) (r)--(u2)  (r)--(u3)--(u4) (u3)--(u6);

\node at (4,3) {$p.$};

\end{tikzpicture}
\end{center}

\end {figure}

\pagebreak

If $\deg(v)=4$ we only need to consider the following tree:

\begin{figure}[ht]
\begin{center}
\begin{tikzpicture}[scale=1]
\tikzstyle{vertex}=[circle, draw, inner sep=0pt, minimum size=6pt]
\tikzstyle{vert}=[circle,fill=black,inner sep=3pt]
\tikzstyle{overt}=[circle,fill=black!30, inner sep=3pt]
\tikzstyle{root}=[rectangle,fill=black,inner sep=3pt]

  \node[root, label=above:\tiny{}] (r) at (2,3) {};
  \node[vertex, label=left:\tiny{}](u1) at (1,2){};
  \node[vertex, label=left:\tiny{}](u2) at (2,2){};
  \node[vertex, label=left:\tiny{}](u3) at (3,2){};
  \node[vertex, label=left:\tiny{}](u4) at (4,2){};
  \node[vertex, label=left:\tiny{}](u5) at (4,1){};

 \draw[color=black] 
   (u1)--(r)--(u2) (r)--(u3)  (r)--(u4)--(u5);

\node at (1,3) {$q.$};
\end{tikzpicture}
\end{center}

\end {figure}

If $\deg(v)=5$ we have the following tree:

\begin{figure}[ht]
\begin{center}
\begin{tikzpicture}[scale=1]
\tikzstyle{vertex}=[circle, draw, inner sep=0pt, minimum size=6pt]
\tikzstyle{vert}=[circle,fill=black,inner sep=3pt]
\tikzstyle{overt}=[circle,fill=black!30, inner sep=3pt]
\tikzstyle{root}=[rectangle,fill=black,inner sep=3pt]

  \node[root, label=above:\tiny{}] (r) at (2,3) {};
  \node[vertex, label=left:\tiny{}](u1) at (1,2){};
  \node[vertex, label=left:\tiny{}](u2) at (2,2){};
  \node[vertex, label=left:\tiny{}](u3) at (3,2){};
  \node[vertex, label=left:\tiny{}](u4) at (4,2){};
  \node[vertex, label=left:\tiny{}](u5) at (0,2){};

 \draw[color=black] 
   (u1)--(r)--(u2) (r)--(u3)  (u5)--(r)--(u4);

\node at (1,3) {$r.$};
\end{tikzpicture}
\end{center}

\end {figure}

To show $6$-cordial labelings we argue, whenever possible, that the rooted trees above are rooted subgraphs of rooted trees on $6$ vertices for which we have already shown labelings in which each weight appeared in the majority. In this case, if $T$ splits into $T_0$ and such a rooted tree $T_1$, then $T_0$ can be labeled $6$-cordially with some minority weight $w$. We have shown that we can make $w$ a majority weight in a rooted tree $T_r$ on six vertices containing $T_1$ as a subgraph. When we remove the edge $e$ of $T_r$ to make it $T_1$, notice that if the weight on $e$ was $w$, then no weight appears in the majority in this labeling of $T_1$. If the weight on $e$ was not $w$, then this weight was not in the minority in $T_0$. In either case, pasting $T$ back together with this labeling produces a $6$-cordial labeling of $T$.

For each of the above rooted trees, we describe how it is a rooted subtree of a rooted tree which we have labeled or produce a new labeling. When we provide new labelings, we produce two $6$-cordial labelings with no repeated vertex label and one minority weight, which will be different in the two labelings. If a minority weight of $T_0$ is one of the two minority weights in the labelings of $T_1$, then we use the other. If the minority weight of $T_0$ is neither of the two minority weights in the labelings of $T_1$, we use either labeling. We follow the notation in the labelings of the previous cases. These descriptions can be found in List $4$ at the end of the proof.

Next, we consider case $(ii)$ of Proposition \ref{split5}. We label the rooted trees $T'', T''', T^{iv}, T^v$ so that no label or weight appears more than once, and for every label there is a labeling with that label not present. These labelings can be found in List $5$ at the end of the proof.

Since $T_0$ is of order $6m+1$, we can label it $6$-cordially by the induction hypothesis and Hovey's lemma. This means that no weight on $T_0$ is in the minority and one label is in the majority. For the minority label in one of the cordial labelings of the rooted tree $T'$, we choose the majority label of $T_0$. Pasting $T$ back together produces a $6$-cordial labeling of $T$.

Finally, for case $(iii)$ of Proposition \ref{split5}, notice that the rooted forest $F'$ is a rooted subforest of $F$. Hence, as in the case of the rooted trees of order $5$, we can apply the labelings of $F$.

\medskip

\underline{\emph{List $1$:}}

\noindent $a.\,\, 0,4,5,2,1,0,3;$ majority weight $=0$.\\
$a.\,\, 0,1,4,0,5,3,2;$ majority weight $=1$.\\
$a.\,\, 0,2,5,1,3,4,0;$ majority weight $=2$.\\
$a.\,\, 0,2,3,5,0,4,1;$ majority weight $=3$.\\
$a.\,\, 0,2,3,5,1,4,0;$ majority weight $=4$.\\
$a.\,\, 0,4,5,1,2,0,3;$ majority weight $=5$.\\

\noindent $b.\,\, 0,0,3,4,5,1,2;$ no majority weight.\\

\noindent $c.\,\, 0,0,3,1,4,2,5;$ no majority weight.\\

\noindent $d.\,\, 0,0,5,2,3,4,1;$ majority weight $=0$.\\
$d.\,\, 0,0,5,1,3,4,2;$ majority weight $=1$.\\
$d.\,\, 0,2,3,0,5,4,1;$ majority weight $=2$.\\
$d.\,\, 0,0,5,1,2,3,4;$ majority weight $=3$.\\
$d.\,\, 0,5,4,1,2,3,0;$ majority weight $=4$.\\
$d.\,\, 0,0,3,1,4,5,2;$ majority weight $=5$.\\

\noindent $e.\,\, 0,2,0,4,5,3,1;$ majority weight $=0$.\\
$e.\,\, 0,1,0,4,3,2,5;$ majority weight $=1$.\\
$e.\,\, 0,2,3,5,0,1,4;$ majority weight $=2$.\\
$e.\,\, 0,5,3,1,2,0,4;$ majority weight $=3$.\\
$e.\,\, 0,5,4,2,1,0,3;$ majority weight $=4$.\\
$e.\,\, 0,5,0,2,3,4,1;$ majority weight $=5$.\\

\noindent $f.\,\, 0,0,3,5,1,2,4;$ majority weight $=0$.\\
$f.\,\, 0,2,3,1,4,5,0;$ majority weight $=1$.\\
$f.\,\, 0,3,2,4,0,1,5;$ majority weight $=2$.\\
$f.\,\, 0,2,1,3,5,0,4;$ majority weight $=3$.\\
$f.\,\, 0,4,3,5,1,2,0;$ majority weight $=4$.\\
$f.\,\, 0,4,3,5,2,1,0;$ majority weight $=5$.\\

\noindent $g.\,\, 0,0,2,3,5,4,1;$ majority weight $=0$.\\
$g.\,\, 0,1,2,4,0,3,5;$ majority weight $=1$.\\
$g.\,\, 0,0,1,3,2,4,5;$ majority weight $=2$.\\
$g.\,\, 0,0,1,3,4,2,5;$ majority weight $=3$.\\
$g.\,\, 0,0,2,3,4,5,1;$ majority weight $=4$.\\
$g.\,\, 0,5,1,2,0,3,4;$ majority weight $=5$.\\

\noindent $h.\,\, 0,3,5,0,4,1,2;$ majority weight $=0$.\\
$h.\,\, 0,1,2,0,3,5,4;$ majority weight $=1$.\\
$h.\,\, 0,3,4,2,5,0,1;$ majority weight $=2$.\\
$h.\,\, 0,2,4,1,3,5,0;$ majority weight $=3$.\\
$h.\,\, 0,2,3,1,4,5,0;$ majority weight $=4$.\\
$h.\,\, 0,3,5,4,0,1,2;$ majority weight $=5$.

\medskip

\underline{\emph{List $2$:}}

\noindent $T': 0,2,3,5,1,4;$ minority label $=0$.\\
$T': 0,2,0,3,4,5;$ minority label $=1$.\\
$T': 0,1,0,3,5,4;$ minority label $=2$.\\
$T': 0,4,0,1,2,5;$ minority label $=3$.\\
$T': 0,1,0,2,5,3;$ minority label $=4$.\\
$T': 0,2,0,1,4,3;$ minority label $=5$.

\medskip

\noindent $T'_2: 0,4,2,1,5,3;$ minority label $=0$.\\
$T'_2: 0,5,3,2,0,4;$ minority label $=1$.\\
$T'_2: 0,1,5,4,3,0;$ minority label $=2$.\\
$T'_2: 0,2,4,1,0,5;$ minority label $=3$.\\
$T'_2: 0,2,3,5,0,1;$ minority label $=4$.\\
$T'_2: 0,0,3,2,1,4;$ minority label $=5$.

\medskip

\noindent $T'_3: 0,3,1,4,5,2;$ minority label $=0$.\\
$T'_3: 0,4,2,5,0,3;$ minority label $=1$.\\
$T'_3: 0,0,4,1,3,5;$ minority label $=2$.\\
$T'_3: 0,5,1,2,0,4;$ minority label $=3$.\\
$T'_3: 0,3,1,5,0,2;$ minority label $=4$.\\
$T'_3: 0,0,2,1,3,4;$ minority label $=5$.

\medskip

\noindent $T'_4: 0,4,1,5,2,3;$ minority label $=0$.\\
$T'_4: 0,5,2,0,3,4;$ minority label $=1$.\\
$T'_4: 0,5,1,0,3,4;$ minority label $=2$.\\
$T'_4: 0,5,1,0,2,4;$ minority label $=3$.\\
$T'_4: 0,5,1,0,2,3;$ minority label $=4$.\\
$T'_4: 0,4,1,0,2,3;$ minority label $=5$.

\medskip

\underline{\emph{List $3$:}}

Labels for $F:$

\noindent $0,0,4,0,1,3,5,2;$ no majority weight.\\

\noindent $0,1,0,1,3,4,2,5;$ majority weight $=0$.\\
$0,1,5,0,2,3,1,4;$ majority weight $=1$.\\
$0,1,2,5,0,4,1,3;$ majority weight $=2$.\\
$0,1,4,2,1,5,3,0;$ majority weight $=3$.\\
$0,1,4,3,1,0,2,5;$ majority weight $=4$.\\
$0,1,5,2,0,4,3,1;$ majority weight $=5$.\\

\noindent $0,2,3,4,1,2,5,0;$ majority weight $=0$.\\
$0,2,4,5,1,2,0,3;$ majority weight $=1$.\\
$0,2,1,0,3,2,4,5;$ majority weight $=2$.\\
$0,2,0,2,3,5,4,1;$ majority weight $=3$.\\
$0,2,2,4,5,0,3,1;$ majority weight $=4$.\\
$0,2,5,4,2,1,3,0;$ majority weight $=5$.\\

\noindent $0,3,0,1,3,4,2,5;$ majority weight $=0$.\\
$0,3,1,2,0,4,5,3;$ majority weight $=1$.\\
$0,3,4,5,1,2,0,3;$ majority weight $=2$.\\
$0,3,3,4,0,1,5,2;$ majority weight $=3$.\\
$0,3,4,3,0,5,2,1;$ majority weight $=4$.\\
$0,3,5,4,0,2,1,3;$ majority weight $=5$.\\

\noindent $0,4,0,2,1,3,5,4;$ majority weight $=0$.\\
$0,4,1,2,4,5,3,0;$ majority weight $=1$.\\
$0,4,2,3,0,1,5,4;$ majority weight $=2$.\\
$0,4,3,4,0,1,5,2;$ majority weight $=3$.\\
$0,4,4,0,1,3,5,2;$ majority weight $=4$.\\
$0,4,5,2,0,1,3,4;$ majority weight $=5$.\\

\noindent $0,5,0,1,2,3,4,5;$ majority weight $=0$.\\
$0,5,1,3,0,2,4,5;$ majority weight $=1$.\\
$0,5,2,3,5,0,4,1;$ majority weight $=2$.\\
$0,5,3,0,1,2,4,5;$ majority weight $=3$.\\
$0,5,4,3,0,2,5,1;$ majority weight $=4$.\\
$0,5,5,1,2,4,0,3;$ majority weight $=5$.\\

\medskip

Labels for $F_2:$

Roots $0$ and $1$:

\noindent $0, 1, 3, 0, 1, 2, 4, 5;$ no majority weight\\

Roots $0$ and $3$:

\noindent $0, 3, 5, 4, 3, 2, 0, 1;$ no majority weight\\

Roots $0$ and $5$:

\noindent $0, 5, 3, 2, 1, 0, 4, 5;$ no majority weight\\

Roots $0$ and $4$:

\noindent $0 ,4, 1, 0, 4, 2, 3, 5;$ majority weight $1$\\
$0, 4, 1, 0, 5, 2, 3, 4;$ majoirty weight $2$\\
$0, 4, 3, 0, 4, 2, 1, 5;$ majority weight $3$\\
$0, 4, 3, 0, 5, 4, 1, 2;$ majority weight $4$\\
$0, 4, 5, 0, 4, 2, 1, 3;$ majority weight $5$\\
$0, 4, 3, 1, 4, 5, 0, 2;$ majority weight $0$\\

Roots $0$ and $0$: 

\noindent $0, 0, 4, 0, 3, 1, 2, 5;$ majority weight $1$\\
$0, 0, 1, 2, 3, 4, 0, 5;$ majority weight $2$\\
$0, 0, 0, 2, 4, 1, 3, 5;$ majority weight $3$\\
$0, 0, 1, 0, 3, 4, 2, 5;$ majority weight $4$\\
$0, 0, 2, 0, 1, 5, 3, 4;$ majority weight $5$\\
$0, 0, 3, 5, 4, 1, 2, 0;$ majority weight $0$\\

Roots $0$ and $2$:

\noindent $0, 2, 1, 2, 4, 0, 5, 3;$ majority weight $1$\\
$0, 2, 3, 0, 1, 2, 4, 5;$ majority weight $2$\\
$0, 2, 3, 2, 4, 0, 5, 1;$ majority weight $3$\\
$0, 2, 1, 4, 3, 2, 0, 5;$ majority weight $4$\\
$0, 2, 5, 3, 2, 1, 0, 4;$ majority weight $5$\\
$0, 2, 3, 4, 5, 0, 1, 2;$ majority weight $0$\\

\medskip

Labels for $F_3:$

\noindent $0, 1, 0, 1, 5, 4, 2, 3;$ no majority weight\\
$0, 2, 4, 0, 1, 2, 3, 5;$ no majority weight\\
$0, 3, 0, 1, 3, 2, 4, 5;$ no majority weight\\
$0, 4, 2, 0, 5, 4, 3, 1;$ no majority weight\\
$0, 5, 4, 3 ,1, 2, 0, 5;$ no majority weight\\
$0, 0, 0, 1, 4, 5, 2, 3;$ no majority weight\\

\medskip

Labels for $F_4:$

\noindent $0, 0, 2, 4, 0, 5, 1, 3;$ no majority weight\\
$0, 1, 4, 5, 2, 3, 1, 0;$ no majority weight\\
$0, 2, 3, 2, 1, 1, 4, 5;$ no majority weight\\
$0, 3, 5, 4, 0, 1, 3, 2;$ no majority weight\\
$0, 4, 4, 3, 1, 2, 5, 0;$ no majority weight\\
$0, 5, 3, 4, 0, 5, 2, 1;$ no majority weight\\

\medskip

\underline{\emph{List $4$:}}

\noindent$i.\,\, $ This tree is a rooted subtree of the tree in $a.$\\
$j.\,\, $ This tree is a rooted subtree of the tree in $d.$\\
$k.\,\, $ This tree is a rooted subtree of the tree in $a.$\\
$\ell. \,\, 0,2,3,1,4,5;$ minority weight $=1$\\ 
$\ell. \,\, 0,4,3,5,2,1;$ minority weight $=5$\\
$m. \,\, 0,3,4,2,1,5;$ minority weight $=2$\\
$m. \,\, 0,3,2,4,5,1;$ minority weight $=4$\\
$n. \,\, $ This tree is a rooted subtree of the tree in $d.$\\
$o. \,\, $ This tree is a rooted subtree of the tree in $f.$\\
$p. \,\, 0,3,4,5,1,2;$ minority weight $=2$\\
$p. \,\, 0,3,2,1,5,4;$ minority weight $=4$\\
$q. \,\, $ This tree is a rooted subtree of the tree in $h.$\\ 
$r. \,\, 0,1,2,3,4,5;$ minority weight $=0$\\
$r. \,\, 0,0,1,2,3,4;$ minority weight $=5$\\ 

\medskip

\underline{\emph{List $5$:}}

$T'': $

\noindent$0,1,2,3,4;$ minority labels are $5,0$.\\
$0,0,1,5,2;$ minority labels are $3,4$.\\
$0,0,5,4,3;$ minority labels are $1,2$.\\

$T''': $

\noindent$0,0,1,2,3;$ minority labels are $4,5$.\\
$0,5,1,2,4;$ minority labels are $0,3$.\\
$0,0,3,4,5;$ minority labels are $1,2$.\\

$T^{iv}:$

\noindent$0,2,1,4,5;$ minority labels are $0,3$.\\
$0,0,1,4,3;$ minority labels are $2,5$.\\
$0,0,2,3,5;$ minority labels are $1,4$.\\

$T^v:$

\noindent$0,0,2,1,5;$ minority labels are $3,4$.\\
$0,1,4,3,0;$ minority labels are $2,5$.\\
$0,5,3,2,4;$ minority labels are $0,1$.\\

\end{proof}

\end{document}